\newtheorem{theorem}{Theorem}
\newtheorem{corollary}[theorem]{Corollary}
\newtheorem{definition}[theorem]{Definition}
\newtheorem{lemma}[theorem]{Lemma}
\newtheorem{proposition}[theorem]{Proposition}
\newtheorem{question}[theorem]{Question}
\theoremstyle{remark}
\newtheorem*{remark}{Remark}
\newtheorem*{remarks}{Remarks}
\newtheoremstyle{named}{}{}{\itshape}{}{\bfseries}{.}{.5em}{#1 \thmnote{#3}}
\theoremstyle{named}
\newtheorem*{namedques}{Question}
\newcommand{\R}{\mathbb R}
\newcommand{\Z}{\mathbb Z}
\newcommand{\one}{\mathbf 1}
\newcommand{\ii}{\mathbf i}
\newcommand{\trace}{\textnormal{trace}}
\newcommand{\id}{\textnormal{id}}
\begin{document}

\title{Tangent ray foliations and their associated outer billiards}
\author{Yamile Godoy, Michael Harrison and Marcos Salvai\thanks{This work was supported by Consejo Nacional de Investigaciones Cient\'ificas
y T\'ecnicas and Secretar\'{\i}a de Ciencia y T\'ecnica de la Universidad
Nacional de C\'ordoba.   Portions of this work were completed while the second author was in residence at the Institute for Advanced Study and supported by the National Science
Foundation under Grant No. DMS-1926686. 
}}
\maketitle

\begin{abstract}
Let $v$ be a unit vector field on a complete, umbilic (but not totally
geodesic) hypersurface $N$ in a space form; for example on the unit
sphere $S^{2k-1} \subset \mathbb{R}^{2k}$, or on a horosphere in hyperbolic
space. We give necessary and sufficient conditions on $v$ for the rays with
initial velocities $v$ (and $-v$) to foliate the exterior $U$ of $N$. We find and explore relationships among these
vector fields, geodesic vector fields, and contact structures on $N$.

When the rays corresponding to each of $\pm v$ foliate $U$, $v$ induces an
outer billiard map whose billiard table is $U$. We describe the unit vector
fields on $N$ whose associated outer billiard map is volume preserving.  Also
we study a particular example in detail, namely, when $N \simeq \mathbb{R}^3$
is a horosphere of the four-dimensional hyperbolic space and $v$ is the unit
vector field on $N$ obtained by normalizing the stereographic projection of
a Hopf vector field on $S^{3}$. In the corresponding outer billiard map we
find explicit periodic orbits, unbounded orbits, and bounded nonperiodic orbits. We
conclude with several questions regarding the topology and geometry of bifoliating vector fields and the dynamics of their associated outer billiards.
\end{abstract}

\noindent Key words and phrases: bifoliation, geodesic fibration, Hopf fibration, contact structure, outer billiards, volume-preservation.

\medskip

\noindent Mathematics Subject Classification 2020: 53C12, 37C83, 53D10.

\section{Motivation}
\label{sec:motivation}

We begin with a simple observation: given one of the two unit tangent vector
fields on the unit circle $S^1 \subset \mathbb{R}^2$, the corresponding
tangent rays foliate the exterior of $S^1 \subset \mathbb{R}^2$. This leads
to the following question.

\begin{question}
\label{ques:main} What conditions on a unit tangent vector field on $%
S^{2k-1}\subset \mathbb{R}^{2k}$ guarantee that the corresponding tangent
rays foliate the exterior of $S^{2k-1}$?
\end{question}

Prototypical examples arise from great circle fibrations of $S^{2k-1}$, for
example, the standard Hopf fibrations. Indeed, each great circle $C$ can be
written as the intersection of $S^{2k-1}$ with a $2$-plane $P_C$. Since no
two great circles intersect, the exterior of $S^{2k-1}$ can be foliated using the
corresponding collection of planes, and the exterior of $C$ in each such plane $P_C$ is foliated by
the rays tangent to $C$. In this way, Question \ref{ques:main} is related to
the study of geodesic fibrations of spheres.

Our interest in Question \ref{ques:main} also stems from the fact that \emph{%
skew} geodesic fibrations of Euclidean space (that is, fibrations of $%
\mathbb{R}^n$ by nonparallel straight lines) can only exist for odd $n$.  
Therefore the tangent ray foliations we study here may be viewed as an even-dimensional counterpart to the odd-dimensional phenomenon of skew line fibrations.

Additional motivation for Question \ref{ques:main} arises from the study of 
\emph{outer billiards}. As suggested by its name, outer billiards is played
outside a smooth closed strictly convex curve $\gamma \subset \mathbb{R}^2$,
and can be easily defined as follows: fix one of the two unit tangent vector
fields $v$ on $\gamma$, and observe that the corresponding tangent rays
foliate the exterior of $\gamma$. In particular, for each point $x$ outside
of $\gamma$, there exists exactly one tangent ray passing through $x$, and
the outer billiard map $B$ is defined by reflecting $x$ about the point of
tangency (see Figure \ref{fig:outerplane}). Originally popularized by Moser 
\cite{Moser1, Moser2}, who studied the outer billiard map as a crude model
for planetary motion, the outer billiard has since been studied in a number
of contexts; see \cite{DT, ST1,ST2,ST4} for surveys.

\begin{figure}[ht!]
\centerline{
\includegraphics[width=2.5in]{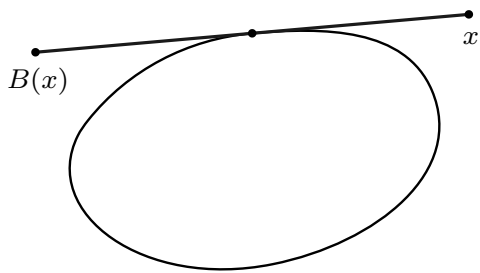}
}
\caption{The outer billiard map in the plane}
\label{fig:outerplane}
\end{figure}

When attempting to define outer billiards in higher dimensional Euclidean
space, one encounters the following issue: given a smooth closed strictly
convex hypersurface $N$, there are too many tangent lines passing through
each point $x$ outside of $N$, and so it is not obvious how to define outer
billiards with respect to $N$. In \cite{ST3}, Tabachnikov resolved this
issue in even-dimensional Euclidean space $\mathbb{R}^{2k}$, endowed with
the standard symplectic structure $\omega$, by appealing to the \emph{%
characteristic line bundle} $\xi \coloneqq \operatorname{ker}(\omega|_N)$ on $N$.
This line bundle $\xi$ has two unit sections $\pm v$, and each has the
property that the corresponding tangent rays foliate the exterior of $N$.
Thus choosing either $v$ or $-v$ yields a well-defined, invertible outer
billiard map on the exterior of $N$; moreover, Tabachnikov proved that this
outer billiard map is a symplectomorphism with respect to $\omega$. As an
example, when $N = S^{2k-1}$, the characteristic lines are tangent to the
great circles of the Hopf fibration.


We emphasize that the essential ingredient in the outer billiard
construction is that the tangent rays corresponding to both $\pm v$ foliate
the exterior of $N$. Thus an additional motivation for Question \ref%
{ques:main} is the construction of outer billiard systems which may exhibit
interesting dynamical properties.

\section{Statement of results}
\label{sec:statement}
Our main result not only provides a complete answer to Question \ref{ques:main}, but it also applies to the more general situation in which the ambient Euclidean space is replaced by the $(n+1)$-dimensional space form $M_{\kappa}$ of constant sectional curvature $\kappa$, and the role of the sphere $S^n \subset \mathbb{R}^{n+1}$ is played by a complete umbilic hypersurface $N \subset M_{\kappa}$ which is not totally geodesic (recall that $N \subset M_{\kappa}$ is \textbf{umbilic} if all of its principal curvatures are equal).  In particular, the sectional curvature of $N$ is constant and strictly larger than $\kappa$ (as a corollary of the Gauss Theorem, see for instance Remark 2.6 in Chapter 6 of \cite{docarmo}).

Specifically, we will consider Question \ref{ques:main} in the following five settings:
\begin{compactenum}[(1)]
\item $\kappa = 0$: $M_0 = \R^{n+1}$, and $N$ is an $n$-dimensional round sphere,
\item $\kappa > 0$: $M_\kappa$ is a round sphere and $N$ is a geodesic sphere which is not a great sphere; that is, its curvature is larger than $\kappa$, 
\item $\kappa < 0$: $M_{\kappa}$ is the $(n+1)$-dimensional hyperbolic space $H^{n+1}$.  It is convenient to describe the umbilics in the upper-half space model $\left\{(x_0,\dots,x_n) \in \R^{n+1} \mid x_0 > 0\right\}$ with the metric $ds^2 = (dx_0^2 + \cdots + dx_n^2)/(-\kappa x_0^2)$.  The umbilic hypersurfaces are then the intersections with $H^{n+1}$ of hyperspheres and hyperplanes in $\R^{n+1}$, and there are three possibilities for $N$:
\begin{compactenum}[(3a)]
\item $N$ is a geodesic sphere,
\item $N$ is a horosphere, which is congruent by an isometry to $x_0 = 1$,
\item $N$ is congruent by an isometry to the intersection of $H^{n+1}$ with a hyperplane through the origin which is not orthogonal to the hyperplane $x_0 = 0$.
\end{compactenum}
\end{compactenum}

Observe that in each case, $N$ is diffeomorphic to a sphere or to Euclidean space.  



Next, we define the \textbf{exterior} $U$ of $N$ as the connected component of $M_{\kappa }-N$ into which the opposite of the mean curvature vector field of $N$ points, except that for $\kappa > 0$, we further restrict $U$ to be the zone between $N$ and its antipodal image.  The exterior $U$ in each of the five situations is depicted in Figure \ref{fig:u}.

\begin{figure}[ht!]
\centerline{
\includegraphics[width=6in]{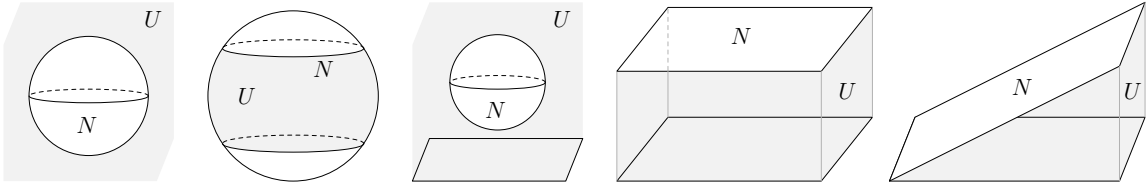}
}
\caption{The exterior $U$, from left to right, for $\kappa = 0$, $\kappa >0$, and $\kappa < 0$} 
\label{fig:u}
\end{figure}


Given $u\in TM_{\kappa}$, we denote by $\gamma _{u}$ the unique
geodesic in $M_{\kappa } $ with initial velocity $u$, and we write $%
T_{\kappa }=\infty $ for $\kappa \leq 0$ and $T_{\kappa}=\pi/\sqrt{\kappa}$ for $\kappa >0$. Note that a unit speed geodesic of the sphere tangent to $N$ at $p$ travels through $U$ and hits $-p \in -N$ at time $T_\kappa$.

\begin{definition}
Let $v$ be a smooth unit vector field on a complete umbilic hypersurface $N$
of $M_{\kappa }$ which is not totally geodesic, and let $U$ be the exterior
of $N$ as defined above. We say that $v$: 
\begin{compactitem}
\item \textbf{forward foliates $U$} if the geodesic rays $\gamma _{v\left(p\right)}\left(0,T_{\kappa }\right)$ are the leaves of a smooth foliation of $U$,
\item \textbf{backward foliates $U$} if the geodesic rays $\gamma _{-v\left(p\right)}\left(0,T_{\kappa }\right)$ are the leaves of a smooth foliation of $U$,
\item \textbf{bifoliates $U$} if $v$ both forward and backward foliates $U$.
\end{compactitem}
\end{definition}


While a forward or backward foliation induces a dynamical system which may not be time-reversible, a bifoliation induces a smooth, invertible outer billiard map on $U$ (see (\ref{eqn:outerbilliarddef}) for the definition), justifying our interest in this specific notion.

With all of the terminology introduced above, Question \ref{ques:main} admits the following general formulation:

\begin{namedques}[$\mathbf{1^\prime}$]
Let $U$ be the exterior of a complete umbilic not totally geodesic
hypersurface $N$ in $M_{\kappa }$. What conditions on a smooth unit vector
field $v$ on $N$ guarantee that $v$ bifoliates $U$?
\end{namedques}


We denote by $\nabla$ the Levi-Civita connection on $N$. Observe that since $%
v$ has unit length, the image of $\left( \nabla v \right)_p$ is orthogonal to $%
v(p)$ for every $p \in N$. In particular, $\left( \nabla v\right) _{p}$ is
singular and preserves the subspace $v(p)^\perp \subset T_pN$.

We now provide a complete classification of bifoliating vector fields.

\begin{theorem}
\label{bifoliate} Let $U$ be the exterior of a complete umbilic not totally
geodesic hypersurface $N$ in $M_{\kappa }$ and let $v$ be a smooth unit
vector field on $N$. Then the assertions below are equivalent: 
\begin{compactenum}[(a)]
\item The vector field $v$ bifoliates $U$.
\item For each $p\in N$, any \emph{real} eigenvalue $\lambda $ of the
restriction of the operator $\left( \nabla v\right) _{p}$ to $v(p) ^{\perp }\subset T_{p}N$ satisfies 
$\lambda ^{2} + \kappa \leq 0$.

\item For each $p\in N$ and any \emph{real} eigenvalue $\lambda $ of $%
\left( \nabla v\right)_{p}$, the following condition holds:
\begin{compactenum}[(i)]

\item for $\kappa = 0$\emph{:} $\lambda =0$;

\item for $\kappa > 0$\emph{:} $\lambda =0$ with algebraic multiplicity one;

\item for $\kappa < 0$\emph{:} $\lambda ^{2}\leq -\kappa$.
\end{compactenum}
\end{compactenum}
\end{theorem}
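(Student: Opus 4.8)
The plan is to realize a foliation as a global parametrization and then reduce everything to a Jacobi-field computation along the tangent geodesics. Introduce the smooth map $\Phi\colon N\times(0,T_\kappa)\to M_\kappa$, $\Phi(p,t)=\gamma_{v(p)}(t)$, whose restriction to each vertical line $\{p\}\times(0,T_\kappa)$ traces out the forward ray from $p$; by definition $v$ forward foliates $U$ exactly when $\Phi$ is a diffeomorphism onto $U$, and similarly for $-v$ with the analogous map $\Psi(p,t)=\gamma_{-v(p)}(t)$. First I would compute $d\Phi_{(p,t)}$, which sends $\partial_t$ to $\gamma'_{v(p)}(t)$ and each $w\in T_pN$ to the Jacobi field $J_w$ along $\gamma_{v(p)}$ with $J_w(0)=w$ and $J_w'(0)=\bar\nabla_w v$, where $\bar\nabla$ is the Levi-Civita connection of $M_\kappa$. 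The umbilicity of $N$ enters here through $\bar\nabla_w v=\nabla_w v+\mu\langle w,v\rangle\nu$, where $\nu$ is the unit normal and $\mu\neq0$ is the common principal curvature; this is precisely where the hypothesis ``not totally geodesic'' is used.

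Next I decompose along the geodesic. Since $J_w'(0)\perp v$, the tangential part of $J_w$ is $\langle w,v\rangle\gamma'(t)$, so $\alpha\partial_t+w\in\ker d\Phi_{(p,t)}$ iff $\alpha=-\langle w,v\rangle$ and the normal Jacobi field $J_w^\perp(t)$ vanishes. In a space form, after parallel transport, $J_w^\perp(t)=c_\kappa(t)\,w_W+s_\kappa(t)\,\bar\nabla_w v$, where $w_W$ is the component of $w$ in $v(p)^\perp\subset T_pN$ and $(c_\kappa,s_\kappa)$ equals $(1,t)$, $(\cos\sqrt\kappa t,\sin(\sqrt\kappa t)/\sqrt\kappa)$, or $(\cosh\sqrt{-\kappa}t,\sinh(\sqrt{-\kappa}t)/\sqrt{-\kappa})$ according as $\kappa=0$, $\kappa>0$, or $\kappa<0$. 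Projecting $J_w^\perp(t)=0$ onto $\nu$ forces $\mu\,s_\kappa(t)\langle w,v\rangle=0$, hence $\langle w,v\rangle=0$ because $\mu\neq0$ and $s_\kappa$ does not vanish on $(0,T_\kappa)$; the remaining component reads $c_\kappa(t)\,w+s_\kappa(t)\,A_p w=0$, where $A_p$ denotes the restriction of $(\nabla v)_p$ to $v(p)^\perp$. Thus $d\Phi_{(p,t)}$ is singular iff $-c_\kappa(t)/s_\kappa(t)$ is a real eigenvalue of $A_p$, and $d\Psi_{(p,t)}$ is singular iff $+c_\kappa(t)/s_\kappa(t)$ is. Computing the range of $\pm c_\kappa/s_\kappa$ over $t\in(0,T_\kappa)$ — namely $\R\setminus\{0\}$, $\R$, and $\{\lambda:\lambda^2>-\kappa\}$ for $\kappa=0$, $\kappa>0$, $\kappa<0$ — shows that $\Phi$ and $\Psi$ are immersions at every point iff this forbidden set avoids the real spectrum of every $A_p$, which is exactly condition (b).

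With the infinitesimal statement established, (a)$\Rightarrow$(b) follows by contraposition: if (b) fails at some $(p_0,t_0)$, the nearby rays focus there, so they cannot be the leaves of a smooth foliation. The reverse implication (b)$\Rightarrow$(a) demands upgrading ``local diffeomorphism'' to ``diffeomorphism onto $U$,'' and this global step is the main obstacle. I would foliate $U$ by the equidistant umbilic hypersurfaces $N_r$ of $N$ and show that each tangent ray crosses $N_r$ exactly once, reducing injectivity and surjectivity to the claim that $p\mapsto\gamma_{v(p)}(t(r))$ is a diffeomorphism $N\to N_r$; this is then settled via properness (governed by the boundary behavior $\Phi(p,t)\to p$ as $t\to0$ and $\Phi(p,t)\to\partial\overline U$ as $t\to T_\kappa$) together with a covering-space or degree argument, handled separately in the compact case ($N$ a sphere) and the noncompact cases (horosphere, hyperplane). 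Finally, (b)$\Leftrightarrow$(c) is pure linear algebra: since the image of $(\nabla v)_p$ lies in $v(p)^\perp$, the operator is block lower-triangular with a forced eigenvalue $0$ along $v(p)$ and block $A_p$ on $v(p)^\perp$, so the real spectrum of $(\nabla v)_p$ is $\{0\}$ together with that of $A_p$; unwinding this — in particular the clause ``algebraic multiplicity one'' for $\kappa>0$, which says exactly that $A_p$ has no real eigenvalue at all — converts the bound on $A_p$ in (b) into the case-by-case conditions of (c).
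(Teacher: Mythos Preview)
Your proposal is correct and follows essentially the same route as the paper: the Jacobi-field computation of $d\Phi$, the identification of singularities with real eigenvalues of $A_p=(\nabla v)_p|_{v(p)^\perp}$ via $\cot_\kappa$, the upgrade from local to global diffeomorphism by slicing $U$ into the equidistant umbilics $N_t$ and using a covering argument (compact case) or properness (noncompact case), and the block-triangular linear algebra for (b)$\Leftrightarrow$(c). The only cosmetic difference is that you take ``$\Phi$ is a diffeomorphism'' as synonymous with ``forward foliates,'' whereas the paper separates this out via a short lemma (if the rays are integral curves of a global vector field then a variational Jacobi field cannot vanish once without vanishing identically); your contrapositive ``the rays focus there'' is exactly this statement, so the arguments coincide.
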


\begin{remark}
Although Theorem \ref{bifoliate} is written without explicit topological
restrictions on $N$, the existence of $v$ implies that $N$ is not an even-dimensional sphere.
\end{remark}

\begin{remark} We emphasize a surprising feature of Theorem \ref{bifoliate}: the global condition that $v$ bifoliates $U$ is characterized by an infinitesimal condition on $v$, which one might expect to only guarantee a smooth foliation locally.
\end{remark}

\begin{remark} Looking at the proof of Theorem \ref{bifoliate} one can easily deduce conditions for a unit vector field on $N$ to forward or backward foliate the exterior.
\end{remark}

At the beginning of Section \ref{sec:motivation} we observed that great
circle fibrations of odd-dimensional spheres induce bifoliating
vector fields. This statement persists in the general setting. A \textbf{%
geodesic vector field} on $N$ is a unit vector field on $N$ whose integral
curves are geodesics. Using the explicit criterion of Theorem \ref{bifoliate}%
, we show that geodesic vector fields are bifoliating, but not all
bifoliating vector fields are geodesic.

\begin{theorem}
\label{thm:geodesic} Let $U$ be the exterior of a complete umbilic not
totally geodesic hypersurface $N$ in $M_{\kappa }$. 
\begin{compactenum}[(a)]
\item If a smooth unit vector field $v$ on $N$ is geodesic, then $v$ is bifoliating.
\item If $n > 1$, there exists a smooth bifoliating vector field on $N$ which is not geodesic.
\end{compactenum}
\end{theorem}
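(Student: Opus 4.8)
The plan is to read both statements off the infinitesimal criterion in Theorem~\ref{bifoliate}; throughout I use its form (b), so that everything reduces to controlling the \emph{real} eigenvalues of the restricted operator $A:=(\nabla v)_p|_{v(p)^\perp}$. Write $c$ for the constant sectional curvature of $N$ and $h\neq0$ for its principal curvature; the Gauss equation gives $c=\kappa+h^2$, hence the strict inequality $-c<-\kappa$.

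For part~(a), suppose $v$ is geodesic and that $X\in v(p)^\perp$ is a real eigenvector of $(\nabla v)_p$ with eigenvalue $\lambda$. Since $v$ is a complete geodesic field, its flow $\Phi_t$ is by diffeomorphisms and each orbit $\gamma(t)=\Phi_t(p)$ is a unit-speed geodesic, so $J(t):=d(\Phi_t)_p(X)$ is a Jacobi field along $\gamma$ with $J(0)=X$ and $\tfrac{DJ}{dt}(0)=(\nabla v)_pX=\lambda X$. Because each $\Phi_t$ is a diffeomorphism, $J$ never vanishes. A short computation shows $J\perp\gamma'$, so in constant curvature $c$ one has $J(t)=f(t)E(t)$ with $E$ the parallel transport of $X$ and $f''+cf=0$, $f(0)=1$, $f'(0)=\lambda$; thus $f$ is nowhere zero on $\R$. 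Examining the three cases of this scalar ODE shows a real eigenvalue can exist only if $c\le0$, in which case $\lambda^2\le-c$. In all cases $\lambda^2\le-c<-\kappa$, so $\lambda^2+\kappa<0$ and condition~(b) holds; by Theorem~\ref{bifoliate}, $v$ bifoliates.

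For part~(b) I start from the observation that, in a basis adapted to $T_pN=\R v(p)\oplus v(p)^\perp$, the operator $(\nabla v)_p$ is block lower-triangular with vanishing $v(p)$-row (its image lies in $v(p)^\perp$) and lower-left block $\nabla_vv$; hence its spectrum equals $\{0\}\cup\operatorname{spec}(A)$ \emph{independently of} $\nabla_vv$. The bifoliation condition is therefore blind to the geodesic property, which suggests perturbing a geodesic field. Take the distinguished geodesic field $v_0$ in each setting: a parallel field on the flat horosphere (so $A_0=0$), the gradient of a Busemann function in case~(3c) (so $A_0=\sqrt{-c}\,\mathrm{Id}$, with $(\sqrt{-c})^2=-c<-\kappa$), or a Hopf field on an odd-dimensional round $N$ (so $A_0$ is a nonzero multiple of a complex structure, with purely imaginary spectrum). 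In every case the spectrum of $A_0$ lies strictly inside the admissible region, or strictly off the real axis. Now fix a small ball $B$, a local unit field $w_0\perp v_0$ on $B$, and a bump $\beta$ supported in $B$, and set $v_\epsilon=\cos(\epsilon\beta)\,v_0+\sin(\epsilon\beta)\,w_0$ (equal to $v_0$ off $B$): this is a global smooth unit field, $C^1$-close to $v_0$. For small $\epsilon$ the spectrum of $(\nabla v_\epsilon)|_{v_\epsilon^\perp}$ is close to that of $A_0$, so its real eigenvalues still satisfy $\lambda^2+\kappa\le0$, and $v_\epsilon$ bifoliates. Choosing $\beta$ and $w_0$ so that $\nabla_{v_\epsilon}v_\epsilon\neq0$ somewhere—possible exactly because $n>1$ leaves room to rotate—yields a bifoliating field that is not geodesic.

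The point requiring the most care is the spectral-continuity step: a priori a conjugate pair of non-real eigenvalues of $(\nabla v)|_{v^\perp}$ could migrate onto the real axis into the forbidden region $\lambda^2>-\kappa$. This is precisely why I choose the homogeneous fields $v_0$ above, for which $A_0$ is a multiple of $\mathrm{Id}$ or of a complex structure: its eigenvalues are then uniformly bounded away from the boundary of the admissible region (real case) or from the real axis (imaginary case) on the compact support of the perturbation, so for small $\epsilon$ no eigenvalue can escape. The remaining routine points are the existence of $v_0$ and of the local transverse field $w_0$ (for the round $N$ this forces $n$ odd, consistent with the Remark following Theorem~\ref{bifoliate}), and the verification that the perturbation is genuinely non-geodesic.
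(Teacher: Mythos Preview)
Your argument for part~(a) is correct and is essentially the paper's own proof: both construct a Jacobi field along an $N$-geodesic from an eigenvector $X\in v(p)^\perp$, use that the integral curves of $v$ foliate $N$ (you phrase this as ``$\Phi_t$ is a diffeomorphism'', the paper via Lemma~\ref{Jfol}) to conclude the Jacobi field never vanishes, read off $\lambda^2+c\le0$ for the intrinsic curvature $c$ of $N$, and then invoke $c>\kappa$.

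For part~(b) your approach is correct but genuinely different from the paper's. The paper writes down explicit global perturbations in each case: on the sphere it normalizes $(J+\varepsilon P)p$ for a particular linear map $P$ and checks directly that $v_\varepsilon(v_\varepsilon(e_1))\neq -e_1$; on the hyperbolic $N$ and the horosphere it gives concrete formulas $w_\varepsilon$, $u_\varepsilon$ depending on $\sin x_1$. You instead run a uniform abstract argument: pick a homogeneous geodesic field $v_0$ whose restricted operator $A_0$ has spectrum strictly inside the admissible set (or strictly off the real axis), perturb locally by a bump rotation $v_\epsilon=\cos(\epsilon\beta)v_0+\sin(\epsilon\beta)w_0$, and appeal to continuity of the characteristic polynomial on the compact support of $\beta$. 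Your approach is cleaner conceptually and treats all cases at once; the paper's buys explicitness and avoids having to justify the spectral-continuity and non-geodesicity steps, both of which you leave somewhat sketchy (though correct---e.g.\ on the flat horosphere with $v_0=e_1$, $w_0=e_2$, one computes $\nabla_{v_\epsilon}v_\epsilon=\epsilon(v_\epsilon\beta)(-\sin(\epsilon\beta)e_1+\cos(\epsilon\beta)e_2)$, nonzero wherever $\partial_1\beta\neq0$).
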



The latter statement is credible for the following simple reason: for $\kappa \geq 0$, the vector
field $v$ tangent to the Hopf fibration on $N = S^{2k+1}$ is both bifoliating and geodesic,
but an arbitrarily small perturbation can ruin the symmetry of the integral
curves while maintaining the open condition that the restriction of the
operator $\left( \nabla v\right) _{p}$ to $v(p)^{\perp }\subset T_{p}N$ has
no real eigenvalues. This idea motivates the explicit examples which we
provide in the proof of Theorem \ref{thm:geodesic}.  

A geodesic vector field on a Riemannian manifold $M$ determines an oriented 
\emph{geodesic foliation} of $M$. In this language, Theorem \ref%
{thm:geodesic} says that a geodesic vector field on $N$ determines \emph{both%
} a geodesic foliation of $N$ itself and a bifoliation of the exterior $U$,
but that some bifoliations of $U$ arise from vector fields on $N$ which do
not determine geodesic foliations.

Geodesic foliations of the space forms $M_\kappa$ are of interest in their
own right and have been studied extensively. Great circle fibrations of $S^3$
are characterized in \cite{GW} and studied in higher dimensions in \cite%
{GWY,McKay}.  Geodesic foliations of $\mathbb{R}^{3}$ and hyperbolic space $%
H^{3}$ have been characterized in \cite{GSMZ,HarrisonAGT,HarrisonMZ,OvsienkoTabachnikov, SalvaiBL}. In \cite%
{Gluck}, Gluck proved that the plane field orthogonal to a great circle
fibration of $S^3$ is a tight contact structure. The relationship between
line fibrations of $\mathbb{R}^3$ and (tight) contact structures was studied
in \cite{BeckerGeiges,HarrisonBLMS,HarrisonAGT}. We show that a similar
relationship exists for bifoliating vector fields.

\begin{theorem}
\label{thm:contact} Let $v$ be a smooth bifoliating unit vector field on a sphere $N \subset S^{4}$. Then the $1$-form dual to $v$ is a contact
form.
\end{theorem}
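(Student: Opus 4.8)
The plan is to prove that the $1$-form $\alpha$ dual to a bifoliating unit vector field $v$ on a sphere $N \subset S^4$ (so $N = S^3$, the only odd-dimensional sphere that can sit as an umbilic hypersurface here) satisfies the contact condition $\alpha \wedge d\alpha \neq 0$ everywhere. Since $\dim N = 3$, I would first translate this into a pointwise algebraic statement about $\nabla v$. Fixing $p \in N$ and an oriented orthonormal frame $\{v(p), e_1, e_2\}$ of $T_pN$, I would compute $d\alpha$ in terms of the connection: using $\alpha(X) = \langle v, X\rangle$ and the standard formula $d\alpha(X,Y) = \langle \nabla_X v, Y\rangle - \langle \nabla_Y v, X\rangle$, the antisymmetric part of $\nabla v$ governs $d\alpha$. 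The contact form condition $(\alpha \wedge d\alpha)(v, e_1, e_2) \neq 0$ reduces to showing that the $2{\times}2$ block of $(\nabla v)_p$ restricted to $v(p)^\perp = \mathrm{span}\{e_1,e_2\}$ has nonzero antisymmetric part; concretely, writing $A$ for the restriction of $(\nabla v)_p$ to $v(p)^\perp$, the quantity $\alpha \wedge d\alpha$ evaluated on the frame is a nonzero multiple of $\langle \nabla_{e_1} v, e_2\rangle - \langle \nabla_{e_2} v, e_1\rangle$, the off-diagonal antisymmetric entry of $A$.

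Next I would invoke the hypothesis that $v$ bifoliates $U$. Here $N \subset S^4$ means $\kappa > 0$, so by Theorem \ref{bifoliate}, condition (b) applies: any real eigenvalue $\lambda$ of $A$ satisfies $\lambda^2 + \kappa \leq 0$. But $\kappa > 0$ forces $\lambda^2 < 0$, which is impossible for real $\lambda$; hence $A$ has \emph{no real eigenvalues}. The crucial observation is that $A$ is a real $2 \times 2$ matrix (the operator on the two-dimensional space $v(p)^\perp$), and a real $2\times 2$ matrix has no real eigenvalues precisely when its discriminant is negative, equivalently $(\trace A)^2 < 4 \det A$. In particular this forces $\det A > 0$ and, more to the point, the antisymmetric part of $A$ must be nonzero: a symmetric $2\times 2$ matrix always has real eigenvalues, so if the antisymmetric part of $A$ vanished then $A$ would be symmetric and would possess real eigenvalues, contradicting the bifoliation condition.

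Combining these two steps yields the result: the off-diagonal antisymmetric entry $\langle \nabla_{e_1} v, e_2\rangle - \langle \nabla_{e_2} v, e_1\rangle$ is exactly twice the antisymmetric part of $A$ and is therefore nonzero at every $p \in N$, so $(\alpha \wedge d\alpha)_p \neq 0$ and $\alpha$ is a contact form. I would carry out the argument in the order: (1) reduce the contact condition to nonvanishing of the antisymmetric part of $A$ via the frame computation; (2) apply Theorem \ref{bifoliate}(b) with $\kappa > 0$ to conclude $A$ has no real eigenvalues; (3) use the elementary fact that a real $2 \times 2$ matrix with no real eigenvalues cannot be symmetric, hence has nonzero antisymmetric part.

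The main obstacle I anticipate is the bookkeeping in step (1): one must verify that $\alpha \wedge d\alpha$ evaluated on the adapted frame genuinely isolates the antisymmetric part of the $v(p)^\perp$ block and that no contribution from $\nabla_v v$ (the component of $\nabla v$ in the $v$-direction, which is tangent to the integral curve) contaminates the expression. Because $\alpha(v) = 1$ and $\alpha$ annihilates $v^\perp$, the wedge $\alpha \wedge d\alpha$ only sees $d\alpha$ restricted to $v^\perp \times v^\perp$, which is precisely the antisymmetric part of $A$; confirming this cleanly — rather than through a brute-force $3 \times 3$ determinant expansion — is the one place where care is needed. Once that reduction is in place, the remaining linear-algebra fact is immediate and the theorem follows without any global or analytic input, which is itself noteworthy given the global nature of the bifoliation hypothesis.
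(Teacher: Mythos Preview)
Your proposal is correct and follows essentially the same approach as the paper: both reduce the contact condition to the nonvanishing of the antisymmetric part of $(\nabla v)_p|_{v(p)^\perp}$ via the identity $d\alpha(X,Y)=\langle\nabla_X v,Y\rangle-\langle\nabla_Y v,X\rangle$, and then observe that a symmetric restriction would have real eigenvalues, contradicting Theorem~\ref{bifoliate}(b) with $\kappa>0$. The paper phrases the key step abstractly (if $\alpha$ is not contact then $(\nabla v_p)^T-\nabla v_p$ vanishes on $\xi_p$, so the restriction is symmetric), while you work in an explicit frame, but the argument is the same.
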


\begin{remarks}
We make several remarks to contextualize Theorem \ref{thm:contact}. 
\begin{compactenum}[1)]
\item By Theorem \ref{bifoliate}, every bifoliating vector field $v$ on $N \subset S^4$ satisfies a certain condition, namely, that $\left( \nabla v\right) _{p}$ has rank $2$ for all $p\in S^{3}$.  Bifoliating vector fields on $S^3 \subset \R^4$ do not necessarily satisfy this nondegeneracy condition; see Proposition \ref{prop:bizero} for an example.  However, Theorem \ref{thm:contact} and its proof do hold for smooth bifoliating vector fields on $S^3 \subset \R^4$ if the condition is added as a hypothesis.

\item Theorem \ref{thm:contact} fails in higher dimensions for $\kappa \geq 0$. In \cite{GluckYang}, Gluck and Yang construct examples of unit vector fields on $S^{n}$, for odd $n\geq 5$, which determine great circle fibrations (and thus are bifoliating by Theorem \ref{thm:geodesic}) for which the dual form is not contact.

\item Theorem \ref{thm:contact} fails for the horosphere in hyperbolic space, since a constant vector field is bifoliating, but the dual $1$-form is not contact.

\item We do not know if the contact structures in Theorem \ref{thm:contact} are tight; see Question \ref{ques:defret} for discussion.
\end{compactenum}
\end{remarks}

We now turn our attention to the dynamical properties exhibited by
bifoliating vector fields. A bifoliating vector field $v$ on $N$
induces an \textbf{outer billiard map} $B \colon U\rightarrow U$
defined by 
\begin{equation}
\label{eqn:outerbilliarddef}
B(\gamma _{v(p)}(-t))=\gamma _{v(p)}(t), \hspace{.25in} (p,t) \in N \times
(0,T_{\kappa}). 
\end{equation}
See Figure \ref{fig:hyper} for a depiction in hyperbolic space.

\begin{figure}[ht!]
\centerline{
\includegraphics[width=3.75in]{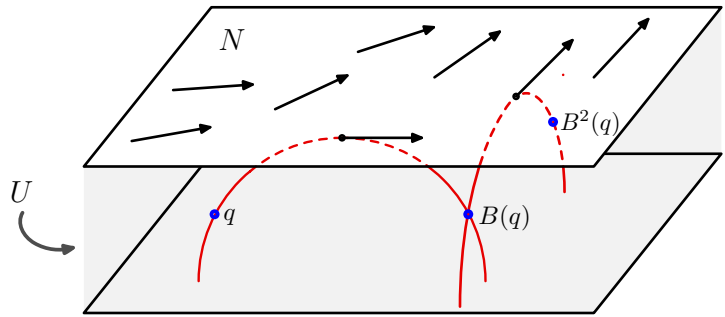}
}
\caption{Two iterations of the outer billiard map induced by a bifoliating vector field on a horosphere $N$,  seen in the upper-half space model of hyperbolic space}
\label{fig:hyper}
\end{figure}

Typically, the dynamics of billiard systems can be
studied via their symplectic properties. However, the outer billiard systems
induced by bifoliating vector fields are, in general, not symplectic. It
seems plausible that different techniques are necessary for a careful study of their
dynamics.

On the other hand, we study a particular example in detail, for which $%
N \simeq \mathbb{R}^3$ is a horosphere in $H^4$ and $v$ is the unit vector
field obtained by normalizing the stereographic projection of the Hopf
vector field on $S^3$. For the associated outer billiard map we find explicit periodic orbits, unbounded orbits, and bounded nonperiodic orbits; see Proposition \ref{prop:hopfr3}.

We conclude by studying the relationship between volume-preservation of $B$
and the characteristic polynomials $Q_p(s) \coloneqq \det \left( s~\text{id}%
_{T_{p}N}-\left( \nabla v\right) _{p}\right) $ of $\left( \nabla v\right)
_{p}$ for $p \in N$.

\begin{theorem}
\label{volume} Let $U$ be the exterior of a complete umbilic not totally
geodesic $n$-dimensional hypersurface $N$ in $M_{\kappa }$ and let $v$ be a
smooth bifoliating unit vector field on $N$.  Then the associated outer billiard map $B \colon U \to U$ be preserves volume if and
only if, for all $p \in N$ and $s \in \mathbb{R}$, 
\begin{equation*}
Q_{p}(-s) =\left( -1 \right)^{n}Q_{p}(s); 
\end{equation*}
that is, for all $p$, the parity of $Q_{p}$ coincides with the parity of $n$.
\end{theorem}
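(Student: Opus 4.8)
The plan is to realize $B$ as a composition of the two tangent-ray parametrizations of $U$ and to compare their volume distortions via Jacobi fields. Write $\Phi(p,t) = \gamma_{v(p)}(t)$ and $\Psi(p,t) = \gamma_{-v(p)}(t) = \gamma_{v(p)}(-t)$ for $(p,t) \in N \times (0,T_{\kappa})$. Since $v$ bifoliates $U$, both $\Phi$ and $\Psi$ are diffeomorphisms onto $U$, and by the defining equation (\ref{eqn:outerbilliarddef}) we have $B \circ \Psi = \Phi$, hence $B = \Phi \circ \Psi^{-1}$. Fixing a reference volume form $\rho = dt \wedge dV_N$ on $N \times (0,T_{\kappa})$ and writing $\Phi^*\mu = \mathcal J_v\,\rho$, $\Psi^*\mu = \mathcal J_{-v}\,\rho$ for the Riemannian volume $\mu$ on $U$, the chain rule gives $|\det dB| = |\mathcal J_v / \mathcal J_{-v}|$ at corresponding points. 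Thus $B$ preserves volume if and only if $|\mathcal J_v| = |\mathcal J_{-v}|$ throughout $N \times (0,T_{\kappa})$, and the problem reduces to computing these two densities.

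Next I would compute $\mathcal J_v$ by Jacobi fields. Fix $p$ and an orthonormal basis $e_1 = v(p), e_2, \dots, e_n$ of $T_pN$. The columns of $d\Phi_{(p,t)}$ are $\dot\gamma_{v(p)}(t)$ and the Jacobi fields $J_{e_i}$ along $\gamma_{v(p)}$ with $J_{e_i}(0) = e_i$ and $J_{e_i}'(0) = \nabla^{M_\kappa}_{e_i} v$. Using that $N$ is umbilic, so its shape operator is $\lambda\,\id$ with $\lambda \neq 0$ (as $N$ is not totally geodesic), I would split $J_{e_i}'(0)$ into the intrinsic part $(\nabla v)_p(e_i)$ and the normal part $\lambda\langle e_i, v(p)\rangle \nu$. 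Since $|v|=1$, the component of each $J_{e_i}$ along $\dot\gamma$ is constant, so subtracting multiples of $\dot\gamma$ reduces the density to the determinant of the normal parts $J_{e_i}^\perp(t)$ in a parallel frame of $\dot\gamma^\perp$. In a space form each normal Jacobi field solves $J^{\perp\prime\prime} + \kappa J^\perp = 0$, hence $J_{e_i}^\perp(t) = c_\kappa(t)\,a_i + s_\kappa(t)\,b_i$, where $c_\kappa, s_\kappa$ are the standard cosine/sine solutions, $a_i = J_{e_i}^\perp(0)$, and $b_i = (J_{e_i}^\perp)'(0)$.

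The key step is to evaluate this determinant. One finds $a_1 = 0$ (because $e_1 = v(p)$ is tangent to $\dot\gamma$), $a_i = e_i$ for $i \geq 2$, and $b_i = (\nabla v)_p(e_i) + \lambda\langle e_i, v(p)\rangle\nu$; in particular only the first column carries a $\nu$-component, equal to $\lambda s_\kappa(t)$. Expanding the determinant along the $\nu$-row isolates this factor and leaves an $(n-1)\times(n-1)$ minor, giving $\mathcal J_v = (-1)^{n+1}\lambda\, s_\kappa(t)\,\det\!\big(c_\kappa(t)\,I + s_\kappa(t)\,A\big)$, where $A$ denotes the restriction of $(\nabla v)_p$ to $v(p)^\perp \cap T_pN$. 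Running the same computation for $-v$ replaces $A$ by $-A$ and flips one sign, yielding $\mathcal J_{-v} = (-1)^{n}\lambda\, s_\kappa(t)\,\det\!\big(c_\kappa(t)\,I - s_\kappa(t)\,A\big)$. Since $\lambda \neq 0$ and $s_\kappa(t)\neq 0$ on $(0,T_{\kappa})$, volume preservation is equivalent to $|\det(c_\kappa I + s_\kappa A)| = |\det(c_\kappa I - s_\kappa A)|$ for all $t$, i.e. $|\chi_A(r)| = |\chi_A(-r)|$ with $r = c_\kappa(t)/s_\kappa(t)$ and $\chi_A$ the characteristic polynomial of $A$.

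Finally I would translate this into the stated parity condition. As $t$ ranges over $(0,T_{\kappa})$ the quantity $r = c_\kappa(t)/s_\kappa(t)$ ranges over an infinite subset of $\mathbb{R}$ (namely $(0,\infty)$, all of $\mathbb{R}$, or $(\sqrt{-\kappa},\infty)$ for $\kappa = 0$, $\kappa > 0$, $\kappa < 0$ respectively), so the polynomial identity $\chi_A(r)^2 = \chi_A(-r)^2$ holds for infinitely many $r$ and hence identically. Factoring in the polynomial domain and comparing leading coefficients forces $\chi_A(-s) = (-1)^{n-1}\chi_A(s)$. Because $(\nabla v)_p$ is block lower-triangular in the basis $\{v(p), e_2, \dots, e_n\}$ with a zero in the $v$-slot, its characteristic polynomial factors as $Q_p(s) = s\,\chi_A(s)$; substituting shows that $\chi_A(-s) = (-1)^{n-1}\chi_A(s)$ is exactly $Q_p(-s) = (-1)^n Q_p(s)$, which yields both directions. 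The hard part will be the Jacobi-field bookkeeping of the middle steps: correctly extracting the second-fundamental-form contribution $\lambda\nu$ from umbilicity and reducing the full $n\times n$ density to the $(n-1)\times(n-1)$ determinant $\det(c_\kappa I \pm s_\kappa A)$, after which the polynomial argument is routine.
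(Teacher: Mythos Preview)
Your proposal is correct and follows essentially the same route as the paper: write $B$ as the composition of the forward and backward ray parametrizations, compute the Jacobian of each via Jacobi fields in the space form (using the Gauss formula for the umbilic $N$ to split off the normal contribution), reduce to $\det\big(c_{\kappa}(t)I \pm s_{\kappa}(t)A\big)$ with $A=(\nabla v)_p|_{v(p)^\perp}$, and finish with the polynomial identity $Q_p(s)=s\,\chi_A(s)$. The only cosmetic differences are that the paper packages the two parametrizations as $F_{\pm}$ together with the flip $g(p,t)=(p,-t)$ and tracks signs directly rather than passing through absolute values; your extra step of squaring and then pinning down the sign via the leading coefficient is fine and yields the same conclusion.
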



\begin{corollary}
\label{cor:volume} Let $U$ be the exterior of a complete umbilic not totally
geodesic $n$-dimensional hypersurface $N$ in $M_{\kappa }$,  and let $B \colon U \to U$ be the outer billiard map associated to a bifoliating unit vector field $v$ on $N$.

\begin{compactenum}[(a)]
\item If the map $B$ preserves volume, then $\operatorname{div}v$ vanishes identically, that is, the flow of $v$ preserves the volume of $N$.  If additionally 
$n$ is even, then the restriction of $\left( \nabla v\right) _{p}$ to $v(p)^{\perp }$ is singular for all $p\in N$.
\item If $n=2$ (which implies that $\kappa < 0$, $N$ is diffeomorphic to $\R^2$, and its intrinsic metric has constant Gaussian curvature $k$ with $\kappa < k \leq 0$), then the following are equivalent:
\begin{compactenum}[(i)]
\item the map $B$ preserves volume,
\item $v$ is orthogonal to a geodesic foliation of $N$,
\item $\operatorname{div} v$ vanishes identically.
\end{compactenum}
If $N \simeq \R^2 \subset H^3$ is a horosphere, the above conditions are equivalent to $v$ being constant. 

\item If $n=3$, then the map $B$ preserves volume if and only if $\operatorname{div}v$ vanishes identically.  If additionally $N = S^3$ and $v$ is geodesic (that is, $v$ determines a great circle fibration), then $B$ preserves volume if and only if $v$ is Hopf.
\end{compactenum}
\end{corollary}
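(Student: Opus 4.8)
The plan is to derive all three parts from Theorem \ref{volume} together with two elementary structural facts about $(\nabla v)_p$. First, since $|v|\equiv 1$ the image of $(\nabla v)_p$ lies in $v(p)^\perp$, which the operator also preserves; writing $T_pN = \R v(p)\oplus v(p)^\perp$ makes $(\nabla v)_p$ block lower-triangular with a zero $v(p)$-row, whence
\[
Q_p(s) = s\,\widetilde Q_p(s), \qquad \widetilde Q_p(s) \coloneqq \det\!\left(s\,\id_{v(p)^\perp} - (\nabla v)_p|_{v(p)^\perp}\right).
\]
Second, the coefficient of $s^{n-1}$ in $Q_p$ is $-\trace (\nabla v)_p = -\operatorname{div}v(p)$. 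By Theorem \ref{volume}, $B$ preserves volume iff $Q_p(-s)=(-1)^nQ_p(s)$ for all $p$, i.e. iff every coefficient of $Q_p$ whose degree has parity opposite to $n$ vanishes.

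For part (a): the coefficient of $s^{n-1}$ has parity opposite to $n$, so volume-preservation forces $\operatorname{div}v\equiv 0$, which by the divergence theorem means the flow of $v$ preserves the volume of $N$. If $n$ is even, all odd-degree coefficients of $Q_p=s\widetilde Q_p$ vanish; since the coefficient of $s^k$ in $Q_p$ equals that of $s^{k-1}$ in $\widetilde Q_p$, this kills every even-degree coefficient of $\widetilde Q_p$, in particular the constant term $\widetilde Q_p(0)=\pm\det\big((\nabla v)_p|_{v(p)^\perp}\big)$. Hence $(\nabla v)_p|_{v(p)^\perp}$ is singular for all $p$.

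For part (b), $n=2$: the stated constraints ($\kappa<0$, $N\simeq\R^2$, $\kappa<k\le 0$) follow because the existence of a unit field excludes the even sphere $S^2$, leaving the horosphere and hyperplane cases, with the curvature bounds coming from the Gauss equation. Here $\widetilde Q_p(s)=s-\operatorname{div}v$, so $Q_p(s)=s^2-(\operatorname{div}v)\,s$, which has even parity iff $\operatorname{div}v=0$; this is (i)$\Leftrightarrow$(iii). Letting $w$ be the unit field orthogonal to $v$ (available since $N\simeq\R^2$ is oriented), the frame $\{v,w\}$ gives $\operatorname{div}v=\langle\nabla_w v,w\rangle=-\langle\nabla_w w,v\rangle$; as $\nabla_w w\perp w$ is a multiple of $v$, it vanishes exactly when $\operatorname{div}v=0$, i.e. exactly when the integral curves of $w$ — the leaves of the foliation orthogonal to $v$ — are geodesics, giving (ii)$\Leftrightarrow$(iii). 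Finally, if $N\simeq\R^2$ is a horosphere its induced metric is flat, so a geodesic foliation consists of complete straight lines; distinct leaves being disjoint are parallel, forcing $w$ and hence $v$ to be constant, while a constant field is evidently divergence-free.

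For part (c), $n=3$: now $Q_p(s)=s^3-(\operatorname{div}v)\,s^2+c_2(p)\,s$ (the constant term vanishes by singularity of $(\nabla v)_p$), which has odd parity iff $\operatorname{div}v=0$, proving the first assertion. For the Hopf statement, the easy direction is that a Hopf field $v(p)=J_0p$ generates the isometric flow $e^{tJ_0}$, hence is Killing, so $\nabla v$ is skew and $\operatorname{div}v=\trace(\nabla v)=0$. For the converse I would run a Riccati analysis along the fibers: with $A=(\nabla v)|_{v^\perp}$ parallel-framed along a fiber $\gamma$ (a great circle), the Jacobi equation forces $A'+A^2+k\,\id=0$, where $k>0$ is the constant intrinsic curvature of $N=S^3$. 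Smoothness of the fibration excludes real eigenvalues of $A$ (consistent with Theorem \ref{bifoliate}), and $\operatorname{div}v\equiv 0$ gives $\trace A\equiv 0$; differentiating this along $\gamma$ and using Riccati yields $\det A=k$, so by Cayley--Hamilton $A_p^2=-k\,\id$ at every $p$. The main obstacle is the final step: concluding that a great circle fibration whose transverse rotation is at every point a complex structure of fixed speed ($A^2=-k\,\id$) must be Hopf. I would argue that this forces the fibers to be mutually equidistant (Clifford parallel), which characterizes the Hopf fibration, or invoke the Gluck--Warner classification of great circle fibrations of $S^3$, under which this pointwise condition corresponds to the constant Gauss map defining a Hopf bundle.
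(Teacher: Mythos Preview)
Your argument for parts (a), (b), and the first assertion of (c) is correct and essentially identical to the paper's: both proceed by expanding $Q_p(s)$, reading off the $s^{n-1}$ coefficient as $-\operatorname{div}v$, and using the factorization $Q_p(s)=s\,\widetilde Q_p(s)$ to handle the singularity claim in (a). Your treatment of (b) combines the paper's separate implications (i)$\Rightarrow$(ii) and (ii)$\Rightarrow$(i) into a single direct equivalence (ii)$\Leftrightarrow$(iii), and your horosphere argument (parallel straight lines force $w$, hence $v$, constant) makes explicit what the paper dismisses as ``immediate''; these are cosmetic differences.

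The one substantive divergence is the Hopf characterization in (c). The paper does not attempt an argument here: it simply cites the Gluck--Gu theorem \cite{GluckGu} that the only great circle fibrations of $S^3$ with volume-preserving flow are the Hopf fibrations. Your Riccati computation is correct and yields $A^2=-k\,\id$ pointwise (indeed, $\trace A'=0$ combined with $A'+A^2+k\,\id=0$ gives $\trace A^2=-2k$, and Cayley--Hamilton for a trace-free $2\times 2$ matrix gives $A^2=-(\det A)\id$). But, as you yourself flag, the passage from this pointwise condition to ``the fibration is Hopf'' is not carried out: the Clifford-parallel step is asserted rather than proved, and appealing to Gluck--Warner at this stage is comparable in strength to the paper's direct citation of Gluck--Gu. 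So your route in (c) is more ambitious but ultimately leans on the literature at the same depth; the paper's approach is shorter because Gluck--Gu is precisely the statement needed.
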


We comment that the normalization of a nowhere vanishing Killing field on $N \subset H^3$ provides an example of a vector field satisfying the conditions in (b).

\section{Preliminaries on Jacobi fields}

Here we provide a brief review of Jacobi fields, which arise naturally when
studying variations of geodesics, and thus play a central role in the proofs
of the main theorems. A more thorough treatment can be
found in any standard Riemannian geometry text, for example \cite{docarmo}.

Let $M$ be a complete Riemannian manifold and let $\gamma $ be a complete
unit speed geodesic of $M$. A \textbf{Jacobi field} $J$ along $\gamma $ is
by definition a vector field along $\gamma $ arising via a variation of
geodesics as follows: Let $\delta >0$ and $\phi \colon \mathbb{R}\times (
-\delta ,\delta) \rightarrow M$ be a smooth map such that $s\mapsto
\phi(s,t)$ is a geodesic for each $t\in ( -\delta ,\delta) $ and $\phi ( s,0) =\gamma ( s) $ for all $s$.
Then
\[
J( s) =\left. \frac{d}{dt}\right\vert _{0}\phi ( s,t).
\]

When $M = M_\kappa$, it is well known that
Jacobi fields along a unit speed geodesic $\gamma $ and orthogonal to $%
\gamma ^{\prime }$ are exactly those vector fields $J$ along $\gamma $ with $%
\left\langle J,\gamma ^{\prime }\right\rangle =0$ satisfying the equation 
\begin{align}
\frac{D^{2}J}{ds^{2}}+\kappa J=0.  \label{equationJNormal}
\end{align}

If the initial conditions of $J$ are $J( 0) =a\gamma ^{\prime
}( 0) +u$ and $\frac{DJ}{dt}( 0) =w$, 
with $a\in \mathbb{R}$ and $u,w$
orthogonal to $\gamma ^{\prime }( 0) $, then%
\begin{equation}
J( t) =a\gamma ^{\prime }( t) +c_{\kappa}( t)
U( t) +s_{\kappa}( t) W( t),
\label{Jacok}
\end{equation}%
where $U,W$ are parallel vector fields along $\gamma $ with $U\left(
0\right) =u$ and $W\left( 0\right) =w$, 
\begin{equation}
s_{\kappa}\left( t\right) =\left\{ 
\begin{array}{ll}
\sin \left( \sqrt{\kappa}t\right) /\sqrt{\kappa} & \text{if }\kappa>0, \\ 
t & \text{if }\kappa=0,  \\ 
\sinh \left( \sqrt{-\kappa}t\right) /\sqrt{-\kappa} \ \ & \text{if }\kappa<0,
\end{array}%
\right. \text{\ \ \ \ \ \ \ \ \ and }\hspace{0.25cm}c_{k}(t)=s_{k}^{\prime
}(t).  \label{s_k}
\end{equation}%
Also, we call $\cot _{\kappa}=c_{\kappa}/s_{\kappa}$. Expression (\ref{Jacok}) will
allow us to perform most computations without having to resort to
coordinates of $M_{\kappa }$ or a particular model of it. 

\begin{lemma}
\label{t_o} Let $\lambda \in \mathbb{R}$. If $\lambda ^{2}+\kappa>0$, then the
equation $c_{\kappa}( t) -\left\vert \lambda \right\vert s_{\kappa}(
t) =0$ has a solution in the interval $\left( 0,T_{\kappa}\right) $.
\end{lemma}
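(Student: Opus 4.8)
The plan is to reduce the equation to the monotonicity of the single function $\cot_{\kappa}=c_{\kappa}/s_{\kappa}$ on $(0,T_{\kappa})$. Since $s_{\kappa}(t)>0$ for every $t\in(0,T_{\kappa})$ in all three sign regimes, dividing the equation $c_{\kappa}(t)-|\lambda|s_{\kappa}(t)=0$ by $s_{\kappa}(t)$ shows that it is equivalent to $\cot_{\kappa}(t)=|\lambda|$. Thus it suffices to prove that $|\lambda|$ lies in the range of $\cot_{\kappa}$ on $(0,T_{\kappa})$ and to invoke the intermediate value theorem.

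First I would record the two elementary identities that hold uniformly in $\kappa$: from $s_{\kappa}''+\kappa s_{\kappa}=0$ and $c_{\kappa}=s_{\kappa}'$ one has $c_{\kappa}'=-\kappa s_{\kappa}$, and a direct check in each case gives the Pythagorean-type relation $c_{\kappa}^{2}+\kappa s_{\kappa}^{2}=1$. Using these, a single computation yields
\[
\cot_{\kappa}'=\frac{c_{\kappa}'s_{\kappa}-c_{\kappa}s_{\kappa}'}{s_{\kappa}^{2}}=\frac{-\kappa s_{\kappa}^{2}-c_{\kappa}^{2}}{s_{\kappa}^{2}}=\frac{-1}{s_{\kappa}^{2}}<0,
\]
so $\cot_{\kappa}$ is continuous and strictly decreasing on $(0,T_{\kappa})$, independent of the sign of $\kappa$. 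Its range is therefore the open interval between its two endpoint limits.

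Next I would compute those limits. As $t\to 0^{+}$ one has $s_{\kappa}(t)\to 0^{+}$ and $c_{\kappa}(t)\to 1$, so $\cot_{\kappa}(t)\to+\infty$ in every case. The lower endpoint $L\coloneqq\lim_{t\to T_{\kappa}^{-}}\cot_{\kappa}(t)$ depends on $\kappa$: for $\kappa>0$ one finds $L=-\infty$ (since $c_{\kappa}\to\cos\pi=-1$ while $s_{\kappa}\to 0^{+}$); for $\kappa=0$ one finds $L=0$ (since $\cot_{0}(t)=1/t$); and for $\kappa<0$ one finds $L=\sqrt{-\kappa}$ (since $\cot_{\kappa}(t)=\sqrt{-\kappa}\,\coth(\sqrt{-\kappa}\,t)\to\sqrt{-\kappa}$). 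Hence the range of $\cot_{\kappa}$ on $(0,T_{\kappa})$ is exactly the interval $(L,+\infty)$.

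Finally I would match the hypothesis to this range. The key point is that in every case the condition $\lambda^{2}+\kappa>0$ is precisely the statement $|\lambda|>L$: for $\kappa<0$ it reads $\lambda^{2}>-\kappa$; for $\kappa=0$ it reads $|\lambda|>0$; and for $\kappa>0$ both sides hold automatically. Therefore $|\lambda|\in(L,+\infty)$, which is the range of the continuous strictly decreasing function $\cot_{\kappa}$, and the intermediate value theorem produces a (unique) $t\in(0,T_{\kappa})$ with $\cot_{\kappa}(t)=|\lambda|$, equivalently $c_{\kappa}(t)-|\lambda|s_{\kappa}(t)=0$. There is no serious obstacle here; the only point requiring care is the uniform bookkeeping of the endpoint limits across the three sign regimes and the verification that the single inequality $\lambda^{2}+\kappa>0$ encodes all three range conditions at once.
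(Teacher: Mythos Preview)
Your proof is correct and follows essentially the same approach as the paper: both divide by $s_{\kappa}$ to reduce the equation to $\cot_{\kappa}(t)=|\lambda|$ and then check that the hypothesis $\lambda^{2}+\kappa>0$ places $|\lambda|$ in the range of $\cot_{\kappa}$ on $(0,T_{\kappa})$. The only difference is presentation: the paper simply writes out the three cases explicitly (as $|\lambda|t=1$, $\cot(\sqrt{\kappa}\,t)=|\lambda|/\sqrt{\kappa}$, and $\coth(\sqrt{-\kappa}\,t)=|\lambda|/\sqrt{-\kappa}$) and leaves the range verification implicit, whereas you give a unified treatment via the derivative $\cot_{\kappa}'=-1/s_{\kappa}^{2}$ and an explicit endpoint analysis.
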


\begin{proof}
The assertion follows from the fact that for $\kappa=0$, $\kappa>0$ and $\kappa<0$, the
equation reads $\left\vert \lambda \right\vert t=1$,\ $\cot \left( \sqrt{\kappa}%
t\right) =\left\vert \lambda \right\vert /\sqrt{\kappa}$\ and $\coth \left( \sqrt{%
-\kappa}t\right) =\left\vert \lambda \right\vert /\sqrt{-\kappa}$, respectively.
\end{proof}

\begin{lemma}
\label{Jfol}Let $M$ be a Riemannian manifold and let $\phi :\left(
-\varepsilon ,\varepsilon \right) \times \left( 0,T\right) \rightarrow M$ be
a geodesic variation, that is, $\gamma _{s}:\left( 0,T\right) \rightarrow M$
is a unit speed geodesic in $M$ for all $s\in \left( -\varepsilon
,\varepsilon \right) $, where $\gamma _{s}\left( t\right) =\phi \left(
s,t\right) $. Suppose that $V$ is a unit vector field on $M$ such that $%
V\left( \gamma _{s}\left( t\right) \right) =\gamma _{s}^{\prime }\left(
t\right) $ for all $s,t$. Then the Jacobi field along $\gamma _{0}$
associated with $\phi $ vanishes at some $t_{o}\in \left( 0,T\right) $ only
if it is identically zero.
\end{lemma}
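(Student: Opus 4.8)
The plan is to exploit the hypothesis that every $\gamma_s$ is an integral curve of $V$ in order to show that the associated Jacobi field $J$ satisfies a \emph{first-order} linear ODE along $\gamma_0$, rather than merely the second-order Jacobi equation (\ref{equationJNormal}); uniqueness for such an ODE then forces $J$ to vanish identically as soon as it vanishes once. The point is that the usual Jacobi equation is second order and does not by itself preclude a nonzero Jacobi field from having an isolated zero, so the extra first-order constraint coming from the vector field $V$ is exactly what makes the lemma true.

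First I would write $J(t) = \frac{\partial \phi}{\partial s}(0,t)$ for the Jacobi field associated with $\phi$. The hypothesis $V(\gamma_s(t)) = \gamma_s'(t)$ says precisely that $\frac{\partial \phi}{\partial t}(s,t) = V(\phi(s,t))$ for all $(s,t)$. I would then differentiate this identity covariantly in $s$. On the right-hand side the chain rule for the pullback connection gives $\frac{D}{ds} V(\phi(s,t)) = \nabla_{\partial_s \phi} V$, while on the left-hand side the symmetry of the induced connection on $\phi^* TM$ (the symmetry lemma, $\frac{D}{ds}\frac{\partial \phi}{\partial t} = \frac{D}{dt}\frac{\partial \phi}{\partial s}$) rewrites the term as $\frac{D}{dt}\frac{\partial \phi}{\partial s}$. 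Evaluating at $s=0$ and using $\frac{\partial \phi}{\partial s}(0,t) = J(t)$ yields
\[
\frac{DJ}{dt}(t) = \left(\nabla V\right)_{\gamma_0(t)}\bigl(J(t)\bigr).
\]

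This is a homogeneous linear first-order ODE for the vector field $J$ along $\gamma_0$. Trivializing $\phi^* TM$ along $\gamma_0$ by parallel transport, it takes the form $\tilde J'(t) = A(t)\,\tilde J(t)$ for a smooth matrix-valued coefficient $A(t)$ obtained by conjugating $\left(\nabla V\right)_{\gamma_0(t)}$ by parallel transport. By the uniqueness theorem for linear ODEs, a solution vanishing at a single point $t_o \in (0,T)$ must vanish on all of $(0,T)$. Hence if $J(t_o)=0$ then $J\equiv 0$, as claimed. Note that this argument uses nothing about $M$ beyond that it is Riemannian; the space-form structure plays no role here.

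I expect the only genuine point requiring care to be the bookkeeping in the second step: the correct application of the symmetry lemma together with the chain rule for the covariant derivative of $V$ pulled back along $\phi$. The reduction from the second-order Jacobi equation to the first-order relation above is the crux of the argument, and once it is in place the conclusion is immediate from standard ODE uniqueness.
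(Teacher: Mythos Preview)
Your argument is correct and is essentially the paper's proof: both use the symmetry lemma together with the identity $\gamma_s'=V\circ\phi$ to obtain $\frac{DJ}{dt}=\nabla_{J}V$ along $\gamma_0$. The only cosmetic difference is that the paper evaluates this relation at the single point $t_o$ (so that $\frac{DJ}{dt}(t_o)=\nabla_{J(t_o)}V=0$) and then invokes uniqueness for the second-order Jacobi equation, whereas you keep the relation for all $t$ and invoke first-order ODE uniqueness directly.
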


\begin{proof}
We call $\beta \left( s\right) =\phi \left( s,t_{o}\right) .$ We have that $%
0=J\left( t_{o}\right) =\beta ^{\prime }\left( 0\right) $. We compute 
\begin{eqnarray*}
\frac{DJ}{dt}(t_{o}) &=&\left. \frac{D}{\partial t}\right\vert
_{t_{o}}\left. \frac{\partial }{\partial s}\right\vert _{0}\phi \left(
s,t\right) =\left. \frac{D}{\partial s}\right\vert _{0}\left. \frac{\partial 
}{\partial t}\right\vert _{t_{o}}\gamma _{s}\left( t\right) \\
&=&\left. \frac{D}{\partial s}\right\vert _{0}\gamma _{s}^{\prime }\left(
t_{o}\right) =\left. \frac{D}{\partial s}\right\vert _{0}V(\beta (s))=\nabla
_{\beta ^{\prime }(0)}V=0.
\end{eqnarray*}%
Since $J$ is the solution of a second order differential equation, $J\equiv
0 $, as desired.
\end{proof}

\section{Bifoliations and the proof of Theorem \protect\ref{bifoliate}}

We now return to the situation in which $N$ is a complete umbilic hypersurface of $M_{\kappa }$ which is not totally geodesic.   Recall that a complete list of such pairs $(M_\kappa,N)$ was given at the beginning of Section \ref{sec:statement}.   We denote by $
\overline{\nabla}$ and $\nabla $ the Levi-Civita connections of $M_{\kappa }$ and 
$N$, respectively. The Gauss formula in this case is given by 
\begin{equation}
\overline{\nabla}_{X}Y=\nabla _{X}Y+\langle X,Y\rangle _{\kappa }H, \hspace{.25in} X,Y\in 
\mathfrak{X}(N),
\label{eqn:gauss}
\end{equation}%
where $H$ is the mean curvature vector field on $N$.  Unless otherwise stated, geodesics are always in $M_{\kappa}$.   

Now let $v$ be a unit vector field on $N$.  For $t\in \left( 0,T_{\kappa }\right) $, we define
\[
f_t \colon N \to M_\kappa, \hspace{.5in} f_{t}(p) =\gamma _{v\left( p\right) }(t).
\]
We first concentrate on the image of $f_t$.

Let $G_{\kappa }=$ Iso$_{o}( M_{\kappa }) $ be the identity
component of the isometry group of $M_{\kappa }$ and let $L=\left\{ g\in
G_{\kappa }\mid g( N) =N\right\}$, which is isomorphic to Iso$_{o}(N)$.   When $N$ is a sphere, there are one or two trivial orbits of $L$ (depending on the ambient space); otherwise, the orbits of $L$ are the parallel hypersurfaces to $N$, which are well-known to be umbilic. 

\begin{lemma} For each $t \in (0,T_\kappa)$, the image of $f_t$ is contained in exactly one hypersurface parallel to $N$.
\end{lemma}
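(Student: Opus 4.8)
The plan is to exploit the homogeneity of $N$. Since $N$ has constant sectional curvature, the group $L\cong\operatorname{Iso}_o(N)$ acts transitively on the unit tangent bundle $T^1N$ when $\dim N\geq 2$, and transitively on each of the two components of $T^1N$ when $\dim N=1$. Because the section $p\mapsto(p,v(p))$ is continuous and $N$ is connected, its image lies in a single $L$-orbit inside $T^1N$; this is all the transitivity I will need. The key mechanism is that the whole construction $f_t$ is equivariant: I will show that any two values $f_t(p_1),f_t(p_2)$ are related by an element of $L$, and hence lie on a common $L$-orbit in $M_\kappa$, which is a parallel hypersurface.

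First I would fix $t\in(0,T_\kappa)$ and two points $p_1,p_2\in N$, and use the transitivity above to produce $g\in L$ carrying the unit tangent vector $v(p_1)$ to $v(p_2)$; in particular $g(p_1)=p_2$ and $dg_{p_1}(v(p_1))=v(p_2)$. The crucial input is the standard equivariance of the geodesic flow under isometries, $g\circ\gamma_u=\gamma_{dg(u)}$ as parametrized geodesics. Applying this with $u=v(p_1)$ yields
\[
g\big(f_t(p_1)\big)=g\big(\gamma_{v(p_1)}(t)\big)=\gamma_{dg_{p_1}(v(p_1))}(t)=\gamma_{v(p_2)}(t)=f_t(p_2).
\]
Thus $f_t(p_1)$ and $f_t(p_2)$ lie in the same $L$-orbit, and since $p_1,p_2$ were arbitrary, the entire image $f_t(N)$ is contained in a single $L$-orbit.

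Next I would identify this orbit with a parallel hypersurface, using the description of $L$-orbits recalled just before the lemma: every positive-dimensional orbit is a hypersurface parallel to $N$, the only exceptions being the one or two trivial point-orbits that occur when $N$ is a sphere (its center, and for $\kappa>0$ also the antipodal center). Since $N$ is connected and positive-dimensional and $f_t$ is continuous, its image could reduce to a point only if that point were such a center; but a geodesic tangent to $N$ at $p$ never meets a center for $t\in(0,T_\kappa)$ (for $\kappa=0$ it is a line tangent to the sphere, and for $\kappa>0$ it stays in the zone $U$ between $N$ and its antipodal image, reaching $-p$ only at $t=T_\kappa$). Hence the orbit containing $f_t(N)$ is genuinely a parallel hypersurface. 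Finally, distinct parallel hypersurfaces are disjoint, so this hypersurface is the unique one containing $f_t(N)$, which gives the ``exactly one'' clause.

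The argument is essentially formal once transitivity is in hand; I expect no substantive difficulty, only bookkeeping in the degenerate cases. The points requiring care are verifying that $v$ takes values in a single connected component of $T^1N$ when $\dim N=1$ (so that the identity component $L$ still acts transitively on the relevant vectors), and ruling out the trivial center-orbits in the sphere case so that the orbit obtained is truly a parallel hypersurface rather than a focal point.
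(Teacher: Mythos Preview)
Your proposal is correct and follows essentially the same approach as the paper: both arguments use that $L$ acts transitively on the relevant unit tangent vectors of $N$ to find an isometry carrying $v(p_1)$ to $v(p_2)$, and then invoke the equivariance $g\circ\gamma_u=\gamma_{dg(u)}$ to conclude $g(f_t(p_1))=f_t(p_2)$. The paper's proof is terser (it simply invokes that $N$ is ``extrinsically two-point homogeneous'' and omits the discussion of trivial orbits and the $\dim N=1$ case), but the underlying idea is identical.
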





\begin{proof}
Let $p, q \in N$.  
Since $N$ is extrinsically two-point
homogeneous, there exists $h\in L$ such that $h(p)=q$ and $dh_{p}\,v(p)=v(q)$.  So,
\[
f_{t}( q) =\gamma _{v\left( q\right) }(t)=\gamma _{dh_{p}v(
p) }(t)=h(\gamma _{v\left( p\right) }(t))=h( f_{t}( p)
),
\]
as desired.
\end{proof}

Now for $t \in (0,T_\kappa)$, we define $N_t$ as the hypersurface parallel to $N$ which contains the image of $f_t$.  Since $N_t$ is embedded,  the map $f_t \colon N \to N_t$ is smooth.  We next compute its differential.

\begin{lemma}
\label{Jf}Let $v$ be a unit vector field on $N$. Let $p\in N$ and $%
x=av\left( p\right) +u\in T_{p}N$, with $a\in \mathbb{R}$ and $u \perp v\left(
p\right) $. Then%
\begin{equation}
\left( df_{t}\right) _{p}\left( x\right) =\tau _{0}^{t}\left( av(p)+\left(
c_{\kappa }(t)u+s_{\kappa }(t)\nabla _{x}v\right) +s_{\kappa }(t)aH\left(
p\right) \right),   \label{dft}
\end{equation}%
where $\tau _{0}^{t}$ denotes the parallel transport along $\gamma _{v(p)}$
between $0$ and $t$.
\end{lemma}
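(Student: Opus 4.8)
The plan is to exhibit $(df_t)_p(x)$ as the time-$t$ value of a Jacobi field along $\gamma := \gamma_{v(p)}$ and then to read off the formula directly from the explicit expression (\ref{Jacok}). First I would pick a smooth curve $\alpha \colon (-\varepsilon,\varepsilon) \to N$ with $\alpha(0)=p$ and $\alpha'(0)=x$, and set $\phi(t,r) = \gamma_{v(\alpha(r))}(t)$. For each fixed $r$ the map $t \mapsto \phi(t,r)$ is a geodesic of $M_\kappa$, and $\phi(t,0)=\gamma(t)$, so $\phi$ is a geodesic variation and
\[
J(t) := \left.\frac{\partial}{\partial r}\right|_0 \phi(t,r) = (df_t)_p(x)
\]
is a Jacobi field along $\gamma$. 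It then remains to identify its two initial conditions.

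Evaluating at $t=0$ gives $J(0) = \alpha'(0) = x = av(p)+u$, since $\phi(0,r)=\alpha(r)$. For the covariant derivative I would use the symmetry of the Levi--Civita connection of $M_\kappa$ to interchange the order of differentiation:
\[
\frac{DJ}{dt}(0) = \left.\frac{D}{\partial t}\right|_0\left.\frac{\partial}{\partial r}\right|_0\phi = \left.\frac{D}{\partial r}\right|_0\left.\frac{\partial}{\partial t}\right|_0\phi = \left.\frac{D}{\partial r}\right|_0 v(\alpha(r)) = \overline{\nabla}_x v,
\]
where the derivative produced is the \emph{ambient} one because $\phi$ takes values in $M_\kappa$. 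This is the one delicate point, and it is resolved precisely by the Gauss formula (\ref{eqn:gauss}): since $\langle x, v(p)\rangle_\kappa = a$ (as $v$ is a unit field and $u \perp v(p)$), I obtain $\frac{DJ}{dt}(0) = \nabla_x v + aH(p)$.

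Finally I would check the orthogonality hypotheses of (\ref{Jacok}): its data must be perpendicular to $\gamma'(0)=v(p)$. Indeed $u \perp v(p)$ by assumption; $\nabla_x v \perp v(p)$ because $v$ is unit (so the image of $(\nabla v)_p$ lies in $v(p)^\perp$, as already noted in the text); and $H(p) \perp v(p)$ because the mean curvature vector is normal to $N$. Hence, in the notation of (\ref{Jacok}), I take tangential coefficient $a$, the given $u$, and $w = \nabla_x v + aH(p)$, all satisfying the required orthogonality. Applying (\ref{Jacok}), together with the fact that $\gamma'$ is parallel along $\gamma$ (so $\gamma'(t) = \tau_0^t(v(p))$), yields
\[
J(t) = \tau_0^t\big(av(p)\big) + c_\kappa(t)\,\tau_0^t(u) + s_\kappa(t)\,\tau_0^t\!\big(\nabla_x v + aH(p)\big),
\]
and collecting terms under the linear map $\tau_0^t$ gives exactly (\ref{dft}). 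The main obstacle is thus entirely the computation of $\frac{DJ}{dt}(0)$: one must invoke the symmetry lemma for $\overline{\nabla}$ and then split the resulting ambient derivative into its tangential and normal parts via the Gauss formula, tracking that the mean-curvature contribution is weighted by the component $a$ of $x$ along $v(p)$. The rest is bookkeeping with (\ref{Jacok}) and the linearity of parallel transport.
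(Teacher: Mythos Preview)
Your proof is correct and follows essentially the same route as the paper: identify $(df_t)_p(x)$ as a Jacobi field along $\gamma_{v(p)}$ via a geodesic variation through curves $\gamma_{v(\alpha(r))}$, compute the initial conditions $J(0)=x$ and $\frac{DJ}{dt}(0)=\overline{\nabla}_x v = \nabla_x v + aH(p)$ using the symmetry lemma and the Gauss formula, and then read off the result from (\ref{Jacok}). Your version is in fact slightly more explicit than the paper's in verifying the orthogonality hypotheses needed to invoke (\ref{Jacok}).
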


\begin{proof}
Let $\alpha $ be a smooth curve in $N$ with $\alpha ^{\prime }(0)=x$. Let $J$
be the Jacobi field along $\gamma _{v(p)}$ associated with the geodesic
variation $\left( s,t\right) \mapsto \gamma _{v(\alpha (s))}(t)$. We have $J(0)=x$ and, with an argument similar to that in the proof of Lemma \ref{Jfol}, 
\begin{equation*}
\frac{DJ}{dt}(0)=\overline{\nabla}_{x}v.
\end{equation*}%
So $J$ depends only on $x$ and we call it $J_{x}$. By (\ref{Jacok}) and (%
\ref{eqn:gauss}),
\begin{equation}
J_{x}(t)=a\gamma _{v(p)}^{\prime }(t)+c_{\kappa }(t)\tau
_{0}^{t}(u)+s_{\kappa }(t)\tau _{0}^{t}(\nabla _{x}v+aH( p) ).  \label{Jx}
\end{equation}%
Then
\begin{equation*}
(df_{t})_{p}\left( x\right) =\left. \frac{d}{ds}\right\vert _{0}f_{t}\left(
\alpha \left( s\right) \right) =J_{x}(t)
\end{equation*}
and the lemma follows.
\end{proof}

\begin{proposition}
\label{f_tDifeo}If for each $p\in N$ any \emph{real} eigenvalue $\lambda $
of the restriction of the operator $\left( \nabla v\right) _{p}$ to $v\left(
p\right) ^{\perp }\subset T_{p}N$ satisfies $\lambda ^{2}+\kappa \leq 0$,
then $f_{t}:N\rightarrow N_{t}$ is a diffeomorphism.
\end{proposition}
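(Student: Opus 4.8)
The plan is to show first that $f_t$ is a local diffeomorphism, and then upgrade this to a global diffeomorphism by a covering-space argument. Since $N$ and $N_t$ have the same dimension and parallel transport is a linear isomorphism, for the first step it suffices to prove that $(df_t)_p$ is injective for every $p$. Suppose $(df_t)_p(x)=0$ with $x=av(p)+u$ and $u\perp v(p)$. In the expression (\ref{dft}) the three vectors $v(p)$, the tangential part $c_\kappa(t)u+s_\kappa(t)\nabla_x v$ (which lies in $v(p)^\perp\cap T_pN$, since $\nabla_x v\perp v(p)$ because $|v|\equiv 1$), and the normal vector $s_\kappa(t)\,aH(p)$ are mutually orthogonal; here I use that $H(p)$ is normal to $N$ and nonzero, as $N$ is not totally geodesic. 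Reading off the $v(p)$-component and the normal component forces $a=0$ (recall $s_\kappa(t)\neq 0$ on $(0,T_\kappa)$), and then the tangential component gives $c_\kappa(t)u+s_\kappa(t)\nabla_u v=0$. If $u\neq 0$, then $u$ is an eigenvector of the restriction of $(\nabla v)_p$ to $v(p)^\perp$ with real eigenvalue $\lambda=-c_\kappa(t)/s_\kappa(t)$, so the hypothesis forces $\lambda^2+\kappa\le 0$. But the identity $c_\kappa^2+\kappa\, s_\kappa^2\equiv 1$ (immediate from $c_\kappa=s_\kappa'$ and $s_\kappa''+\kappa s_\kappa=0$) gives $\lambda^2+\kappa = 1/s_\kappa(t)^2 > 0$, a contradiction. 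Hence $u=0$, so $x=0$, and $(df_t)_p$ is an isomorphism.

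For the global step, the key observation is that $f_t$ displaces every point of $N$ by the same ambient distance. As in the proof of the lemma identifying the image of $f_t$ with a single parallel hypersurface, for a fixed $p_0\in N$ and any $q\in N$ there is $h\in L$ with $h(p_0)=q$, $dh_{p_0}v(p_0)=v(q)$, and $f_t(q)=h(f_t(p_0))$. Since $h$ is an ambient isometry, $\operatorname{dist}_{M_\kappa}(q,f_t(q))=\operatorname{dist}_{M_\kappa}(p_0,f_t(p_0))=:\rho_t$, a constant independent of $q$ with $\rho_t\le t$. Because $N$ and $N_t$ are complete and properly embedded, this uniform displacement makes $f_t\colon N\to N_t$ proper: the preimage of a compact $K\subset N_t$ is closed and contained in the bounded set $\{q : \operatorname{dist}_{M_\kappa}(q,K)\le \rho_t\}$, hence compact. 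A proper local diffeomorphism onto a connected manifold is a covering map.

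It remains to rule out nontrivial covers. When $N$ (hence $N_t$) is diffeomorphic to $\R^n$, or to $S^n$ with $n\ge 2$, the target is simply connected and the connected covering $f_t$ is a diffeomorphism. The only delicate case is $N\cong S^1$, where a covering of the circle may have several sheets; here I would identify the parallel family $\{N_s\}_{s\in[0,T_\kappa)}$ with $N$ via the natural projections and note that $(p,s)\mapsto f_s(p)$ is continuous up to $s=0$ with $f_0=\operatorname{id}_N$, so $f_t$ is homotopic to the identity, hence $\pi_1$-surjective, forcing a single sheet. (This homotopy argument in fact treats all cases uniformly.) Thus $f_t$ is a diffeomorphism. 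I expect the global step — establishing properness and excluding nontrivial covers — to be the main obstacle, whereas the local diffeomorphism follows directly from the Jacobi field formula (\ref{dft}) together with the identity $c_\kappa^2+\kappa\, s_\kappa^2\equiv 1$.
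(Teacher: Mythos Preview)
Your proof is correct and follows essentially the same approach as the paper: the local step via the orthogonal decomposition in (\ref{dft}) together with $\cot_\kappa^2(t)+\kappa=1/s_\kappa(t)^2>0$ is identical, and the global step via properness plus covering-space theory matches the paper's (which invokes compactness for spheres and Hadamard's global inverse theorem for the $\mathbb{R}^n$ case). Your homotopy argument for $S^1$ is more detailed than the paper's ``the assertion is clear,'' and your uniform properness argument is arguably cleaner than the paper's case split, but neither difference is substantive.
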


\begin{proof}
We prove first that $f_{t}$ is a local diffeomorphism.  To argue the contrapositive,  suppose that $(df_{t})_{p}( x) =0$ for some $x \neq 0$.
Since the three terms on the right hand side of (\ref{dft}) are
pairwise orthogonal, we conclude that $a=0$ (in particular, $x=u$) and $c_{\kappa
}(t)x+s_{\kappa }(t)\nabla _{x}v=0$.   Hence,%
\begin{equation*}
\nabla _{x}v=-\cot _{\kappa }( t) x\text{,}
\end{equation*}%
that is, $x\in v(p)^{\perp }$ is an eigenvector of $(\nabla v)_{p}$ with
eigenvalue $-\cot _{\kappa }t$.   Together with the observation that $\cot
_{\kappa }^{2}(t)+\kappa >0$ holds for all $\kappa$, this completes the argument.

Next we prove that $f_{t}:N\rightarrow N_{t}$ is a diffeomorphism.   For $%
N=S^{1}$, the assertion is clear.   If $N$ is
a sphere different from the circle, then, by compactness, $f_{t}$ is a
covering map, which must be a bijection since $N$ is simply connected. 

If $N$ is not a sphere, then $\kappa < 0$ and both $N$ and $N_t$ are diffeomorphic to $\R^n$.  To see that $f_t$ is a diffeomorphism, we apply a ``global inverse'' result of Hadamard (see \cite[Theorem 6.2.8]{KrantzParksBook} or \cite{Gordon}), which asserts that a smooth proper local diffeomorphism from $\R^n$ to $\R^n$ is a diffeomorphism; here \emph{proper} means that the preimage of every compact set is compact. 
Properness of $f_t$ follows from the facts that $f_t$ displaces each point by distance $t$ (in $M_\kappa$) and that $N$ is properly embedded in $M_\kappa$.  Hence $f_t$ is a diffeomorphism.
\end{proof}


We are now prepared to prove Theorem \ref{bifoliate}.

\begin{proof}[Proof of Theorem \protect\ref{bifoliate}]
``(a) $\Rightarrow $ (b)'' 
Let $\lambda \in \mathbb{R}$ and suppose there exists a nonzero tangent
vector $u\in v\left( p\right) ^{\perp }\subset T_{p}N$ such that $\nabla
_{u}v=\lambda u$.   We consider the Jacobi vector field 
\begin{equation*}
J(t)=\left. \frac{d}{ds}\right\vert _{0}\gamma _{ v(\alpha
(s))}( t) \text{,}
\end{equation*}%
where $\alpha $ is a smooth curve in $N$ with $\alpha ^{\prime }\left(
0\right) =u$. By (\ref{eqn:gauss}) we have that $\nabla _{u}v=\overline{\nabla}%
_{u}v$ and so, $J(0)=u$ and $\frac{D J}{dt}(0) = \lambda u$. Using (\ref{Jacok}), we obtain 
\begin{equation*}
J(t)=(c_{\kappa }(t)+ \lambda s_{\kappa }(t))U(t)%
\text{,}
\end{equation*}%
where $U$ is the parallel vector field along $\gamma_{v(p)} $ such that $U(0)=u$.

Now $J$ is associated with the variation given by the geodesic rays of the forward
foliation of $U$, and Lemma \ref{Jfol} implies that $J$ does not vanish for any $t \in (0, T_\kappa)$.  Now if $\lambda \leq 0$, Lemma \ref{t_o} applies and yields $\lambda^2 + \kappa \leq 0$, as desired.  In case $\lambda > 0$, we instead consider the Jacobi field associated with the variation given by the geodesic rays $\gamma_{-v(\alpha(s))}(t)$ of the backward foliation of $U$ and proceed similarly.

\bigskip

``(b) $\Rightarrow $ (a)''  To verify that the vector field $v$ bifoliates $U$, we show that $F_+$ and $F_-$ are diffeomorphisms, where $F_+$ and $F_-$ are the restrictions to $N\times (0,T_{\kappa })$ and $N\times (-T_{\kappa },0)$, respectively, of the smooth function
\[ F \colon N\times (-T_{\kappa },T_{\kappa
})\rightarrow M_{\kappa }, \ \ \ \ 
F(p,s)=\gamma _{v\left( p\right) }(s).
\]
We deal only with $F_+$, since the case of $F_-$ is analogous.

The map $F_+$ is a bijection by Proposition \ref%
{f_tDifeo}, since $f_{t}(N)=N_{t}$ for all $t\in \left( 0,T_{\kappa }\right)$,
and the umbilic hypersurfaces $N_t$ foliate $U$.

Now we check that $(dF_+)_{(p,t)}$ is an isomorphism. By Proposition \ref%
{f_tDifeo}, it sends $T_{p}N\times \left\{ 0\right\} $ isomorphically to $%
T_{f_{t}\left( p\right) }N_{t}=\left( df_{t}\right) _{p}\left( T_{p}N\right) 
$. Hence, it suffices to show that%
\begin{equation*}
(dF_+)_{(p,t)}\left( 0,\left. \tfrac{\partial }{\partial s}\right\vert
_{t}\right) =\gamma _{v\left( p\right) }^{\prime }\left( t\right) \not\in
T_{f_{t}\left( p\right) }N_{t}\text{.}
\end{equation*}
Assume otherwise,  so that $\gamma _{v\left( p\right) }^{\prime }\left(
t\right) =\left( df_{t}\right) _{p}\left( x\right) $ for some $x\in T_{p}N$.
Applying $\left( \tau _{0}^{t}\right) ^{-1}$ to (\ref{dft}), we have%
\begin{equation*}
v\left( p\right) =av\left( p\right) +\left( c_{\kappa }(t)u+s_{\kappa
}(t)\nabla _{x}v\right) +s_{\kappa }(t)aH( p) \text{.}
\end{equation*}%
Now, the scalar product with $v( p)$ yields $a=1$, but the scalar product with $H( p)$
yields $a=0$ (indeed,  $s_{\kappa }(t)H(p) \neq 0$,
since $N$ is not totally geodesic and $s_{\kappa }$ does not vanish on $(0,T_{\kappa })$). This is a contradiction. Consequently, $%
(dF_+)_{(p,t)}$ is an isomorphism.

Finally, the smooth vector field that gives the foliation $\{\gamma
_{v(p)}(0,T_{\kappa })\,|\,p\in N\}$ of $U$ is given by $V=dF\circ
\left( 0,\tfrac{\partial }{\partial s}\right) \circ F^{-1}$.

\bigskip

``(b) $\Leftrightarrow $ (c)'': The equivalence of (b) and (c) follows from
the following linear algebra lemma, with $T=\left( \nabla v\right) _{p}$ and 
$W=v\left( p\right) ^{\perp }$. 
\end{proof}

\begin{lemma}
\label{lem:linalg}
Let $T:V\rightarrow V$ be a linear transformation whose image is contained
in the codimension one subspace $W$ \emph{(}in particular, 0 is an
eigenvalue of $T$\emph{)} and let $S=\left. T\right\vert _{W} \colon
W\rightarrow W$. Then $S$ has no real eigenvalues if and only if the only
real eigenvalue of $T$ is zero with algebraic multiplicity one.
\end{lemma}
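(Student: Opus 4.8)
The plan is to reduce the entire statement to a single factorization of characteristic polynomials, after which both implications fall out by elementary bookkeeping of roots. First I would fix a basis $e_1,\dots,e_{n-1}$ of $W$ and extend it to a basis $e_1,\dots,e_n$ of $V$. Since $\operatorname{im}(T)\subseteq W$, the $e_n$-coordinate of $Tx$ vanishes for every $x\in V$, so in this basis $T$ is represented by a block upper triangular matrix
\[
\begin{pmatrix} A & b \\ 0 & 0 \end{pmatrix},
\]
where $A$ is the $(n-1)\times(n-1)$ matrix of $S=T|_W$ (which is indeed an endomorphism of $W$, as $T(W)\subseteq T(V)\subseteq W$) and $b$ is the column recording $Te_n$. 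Expanding $\det(s\,\id_V-T)$ along its last row, which is $(0,\dots,0,s)$, yields the key identity
\[
\det(s\,\id_V - T) \;=\; s\,\det(s\,\id_W - S),
\]
that is, $\chi_T(s)=s\,\chi_S(s)$. (Note this already recovers the parenthetical remark that $0$ is always an eigenvalue of $T$.)

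From this factorization both directions are immediate. The real eigenvalues of $T$ are exactly the real roots of $\chi_T$, which form the set $\{0\}\cup(\text{real roots of }\chi_S)$; moreover the multiplicity of $0$ as a root of $\chi_T$ equals $1$ precisely when $\chi_S(0)\neq 0$, i.e.\ when $0$ is not a root of $\chi_S$. Hence the hypothesis that $0$ is the \emph{only} real eigenvalue of $T$ forces every real root of $\chi_S$ to equal $0$, while the hypothesis that this eigenvalue has algebraic multiplicity one forces $0$ not to be a root of $\chi_S$; together these say $\chi_S$ has no real root whatsoever, i.e.\ $S$ has no real eigenvalue. Conversely, if $S$ has no real eigenvalue then $\chi_S$ has no real root, so the only real root of $\chi_T=s\,\chi_S$ is $0$, and since $\chi_S(0)\neq 0$ this root is simple, which is exactly the asserted condition on $T$.

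I expect no genuinely hard step here; the whole argument is the standard block triangular determinant computation, valid over $\R$ without any diagonalizability assumption on $T$. The one point requiring care will be the multiplicity bookkeeping, namely tracking how the extra factor of $s$ in $\chi_T=s\,\chi_S$ interacts with a possible vanishing of $\chi_S$ at $0$: it is precisely this interaction that converts the two separate conditions on $T$ (``$0$ is the only real eigenvalue'' and ``its multiplicity is one'') into the single condition that $\chi_S$ has no real root at all.
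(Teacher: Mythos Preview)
Your proof is correct and arguably cleaner than the paper's. You reduce everything to the single identity $\chi_T(s)=s\,\chi_S(s)$, obtained from the block upper triangular form of $T$ in a basis adapted to $W$; both implications then follow by reading off real roots and their multiplicities. The paper instead proves two separate assertions directly at the level of eigenvectors: (a) for $\lambda\neq 0$, $\lambda$ is an eigenvalue of $S$ iff it is an eigenvalue of $T$; and (b) $0$ is an eigenvalue of $S$ iff $0$ has algebraic multiplicity at least two for $T$. The paper's argument for (b) chases generalized eigenvectors (if $Sx=0$ with $x\in W$ and $\operatorname{im}T=W$, write $x=Ty$ to get $T^2y=0$; conversely, intersect $\ker T^2$ with $W$). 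Your characteristic polynomial approach packages both (a) and (b) into one stroke and handles the multiplicity bookkeeping automatically, at the cost of being slightly less geometric; the paper's approach makes more visible \emph{why} an extra zero eigenvalue of $T$ must come from $S$, which is the assertion actually invoked later in the proof of Corollary~\ref{cor:volume}(a).
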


\begin{proof}
The lemma is a consequence of the following two assertions: 
\begin{compactenum}[(a)]
\item A real number $\lambda \neq 0$ is an eigenvalue of $S$ if and only if $\lambda$ is an eigenvalue of $T$. 
\item The map $S$ has eigenvalue $0$ if and only if $T$ has eigenvalue $0$ with algebraic multiplicity greater than one.
\end{compactenum}
Both arguments are straightforward; we only write the details of (b).

Suppose that $0$ is an eigenvalue of $S$ with eigenvector $x \in W$. We may
assume that the image of $T$ is equal to $W$, since otherwise $\dim \operatorname{ker%
}(T) \geq 2$, and the proof is complete. Thus $x =Ty$ for some $y \in V$,
hence $y$ is a generalized eigenvector of $T$ (which must be linearly
independent from $x$) and so the eigenvalue $0$ of $T$ has algebraic
multiplicity at least $2$.

Conversely, suppose that $0$ is an eigenvalue of $T$ with algebraic
multiplicity at least $2$. Then the subspace $\left\{y \in V \mid
T^2y=0\right\}$ intersects $W$ nontrivially. If a nonzero vector $y \in W$
satisfies $T^2y = 0$, then either $y$ or $Ty$ is a nonzero vector in $\operatorname{%
ker}(T)$, and hence in $\operatorname{ker}(S)$.
\end{proof}

\section{Bifoliations and geodesic vector fields}

Here we prove Theorem \ref{thm:geodesic}, that geodesic vector fields are
bifoliating, but not all bifoliating vector fields are geodesic.

\begin{proof}[Proof of Theorem \protect\ref{thm:geodesic}]
Let $v$ be a smooth unit vector field on $N \subset M_\kappa$. To show part
(a), we assume that $v$ is geodesic and we verify that $v$ satisfies the
criterion of Theorem \ref{bifoliate}(b).

Suppose that $N$ has constant sectional curvature $k$, in particular $k>\kappa $. Let $p\in N$, let $x$ be a unit vector in $T_{p}N$ orthogonal to 
$v(p)$, and suppose that $\nabla _{x}v=\lambda x$, with $\lambda \in \mathbb{R}$.   As in the proof of (a) $\Rightarrow$ (b) of Theorem \ref{bifoliate},
but considering Jacobi fields on $N$ defined on $\left( 0,T_{k}\right)$
along geodesics in $N$ (instead of $M_{\kappa }$), we obtain that $\lambda
^{2}+k\leq 0$. Consequently, $\lambda ^{2}+\kappa \leq 0$ and so $v$ is
bifoliating by Theorem \ref{bifoliate}. This completes the proof of part (a).

We now prove part (b).  We begin by constructing a unit vector field on the unit sphere $S^{2k-1}$ which is
bifoliating in any ambient space but is not geodesic, obtained by perturbing the standard Hopf
fibration.

Let $J \colon \mathbb{R}^{2k} \to \mathbb{R}^{2k}$ denote the
standard almost complex structure, which we write explicitly as $%
J(p_1,\dots,p_{2k}) = (-p_2,p_1,\dots,-p_{2k},p_{2k-1})$.  We define $P \colon 
\mathbb{R}^{2k} \to \mathbb{R}^{2k}$ by $P(p_1,\dots,p_{2k}) =
(-p_4,p_3,-p_2,p_1,0,\dots,0)$ and the unit vector field 
\begin{equation*}
v_\varepsilon \colon S^{2k-1} \to S^{2k-1}, \ \ \ v_\varepsilon(p) = \frac{%
(J+\varepsilon P)(p)}{|(J+\varepsilon P)(p)|}. 
\end{equation*}
It is straightforward to check that for $0 \leq \varepsilon < 1$, $%
v_\varepsilon$ defines a smooth unit tangent vector field on $S^{2k-1}$.
Moreover, $v_0(p) = J(p)$ is tangent to the standard Hopf fibration on $%
S^{2k-1}$, and so by the computation in part (a), the restriction of $%
(\nabla v_0)_p$ to $v_0(p)^\perp$ has no real eigenvalues. (In fact, since
the restriction is equal to the restriction of the linear map $J$ itself, it
is easy to check that the eigenvalues are $\pm i$.) Now by continuity of the
roots of the characteristic polynomials and the compactness of $S^{2k-1}$,
the restriction of $(\nabla v_\varepsilon)_p$ to $v_\varepsilon(p)^\perp$
has no real eigenvalues for sufficiently small $\varepsilon > 0$, and
therefore such $v_\varepsilon$ are bifoliating by Theorem \ref{bifoliate}.

Observe that if a unit tangent vector field $v\colon S^{2k-1}\rightarrow
S^{2k-1}$ determines a great circle fibration, then $v(v(p))=-p$ for all $p$%
. We show next that $v_{\varepsilon }$ does not satisfy this condition at $%
p=e_{1}$ if $\varepsilon >0$. Since $v_{\varepsilon }$ is the normalization
of a linear map, it suffices to check that $(J+\varepsilon P)^{2}(e_{1})$
does not normalize to $-e_{1}$. This follows from the following computation: 
\begin{equation*}
(J+\varepsilon P)^{2}(e_{1})=(J+\varepsilon P)(e_{2}+\varepsilon
e_{4})=-(1+\varepsilon ^{2})e_{1}-2\varepsilon e_{3}.
\end{equation*}%
Hence $v_{\varepsilon }$ does not define a great circle fibration for $%
\varepsilon >0$.

Note that the proof can be repeated, with an appropriate scaling, to
construct an example on a sphere of any radius. Thus any sphere $N$ in any ambient space admits a bifoliating nongeodesic vector field.

It remains to consider the cases when $\kappa <0$ and $N$ is diffeomorphic
to $\mathbb{R}^{n}$, $n\geq 2$. If $N$ has constant negative sectional
curvature, by rescaling, we may suppose that $N$ is hyperbolic space with
curvature $-1$ (and so $\kappa <-1$). For $\varepsilon \in \mathbb{R}$ we
define on $N$ the unit vector field 
\begin{equation*}
w_{\varepsilon }\left( x_{0},\dots ,x_{n-1}\right) =\frac{x_{0}}{\sqrt{%
1+\varepsilon ^{2}\sin ^{2}x_{1}}}\left( 1,\varepsilon \sin x_{1},0,\dots
,0\right) \text{.}
\end{equation*}%
Then $w_{0}$ is geodesic (orthogonal to a foliation by parallel horospheres)
and by the proof of part (a), any real eigenvalue $\lambda $ of $\left(
\nabla w_{0}\right) _{p}$ satisfies $\lambda ^{2}\leq 1< - \kappa $ (actually, $%
\lambda ^{2}=1$). For $\varepsilon >0$ sufficiently small, the eigenvalue
condition is maintained but the vector field is no longer geodesic. For a
horosphere $N$, which is isometric to $\mathbb{R}^{n}$, a similar argument can be made for a perturbation
\[
u_\varepsilon(x_1,\dots,x_n) = \frac{1}{\sqrt{1+\varepsilon^2\sin^2(x_1)}}(1,\varepsilon \sin(x_1),0\dots,0),
\]
of the constant vector field $u_0$, since the eigenvalues of $(\nabla u_0)_p$ are zero for all $p$.
\end{proof}

\section{Contact forms and bifoliating vector fields on $S^3$}

Here we show that the $1$-form dual to a bifoliating vector field on a $3$-sphere $N
\subset S^4$ is a contact form. The proof is a simple computation.

\begin{proof}[Proof of Theorem \protect\ref{thm:contact}]
We study the contact condition for a unit $1$-form $\alpha $ on an oriented
Riemannian $3$-manifold $(N,g)$, with dual vector $v$. Consider $\xi %
\coloneqq \ker (\alpha )$ cooriented with $\alpha $, and consider $X,Y\in
\xi $. Then%
\begin{align*}
\alpha \wedge d\alpha (v,X,Y) & =d\alpha (X,Y) =g(\nabla _{X}v,Y)-g(X,\nabla
_{Y}v) \\
& =g(X,(\nabla v)^{T}Y)-g(X,(\nabla v)Y)  =g(X,((\nabla v)^{T}-\nabla v)Y).
\end{align*}

If $\alpha $ is not contact at some point $p$, then for all $x,y\in \xi _{p}$%
, $g_{p}(x,((\nabla v_{p})^{T}-\nabla v_{p})y)=0$. Thus $(\nabla
v_{p})^{T}-\nabla v_{p}$ restricts to the zero linear map on $\xi _{p}$, so $%
\nabla v_{p}=(\nabla v_{p})^{T}$ on $\xi _{p}$. Therefore $\left. \nabla
v_{p}\right\vert _{\xi _{p}}$ is symmetric and hence has real eigenvalues.
This implies by Theorem \ref{bifoliate}(b) that $v$ is not bifoliating.
\end{proof}

As mentioned following the statement of Theorem \ref{thm:contact}, the
same proof works in ambient space $\mathbb{R}^4$ with the additional
hypothesis that $(\nabla v)_p$ has rank $2$ for all $p$. The next example
shows that bifoliating vector fields on $S^3 \subset \mathbb{R}^4$ do not
necessarily satisfy this nondegeneracy condition; that is, there
exists a bifoliating vector field on a $3$-sphere $N \subset \mathbb{R}^4$ which does
not bifoliate the exterior of $N \subset S^4$ and whose dual is
not contact.  In particular, this vector field satisfies condition $(c)(i)$ of Theorem \ref{bifoliate} but not condition $(c)(ii)$.

\begin{proposition}
\label{prop:bizero} There exists a bifoliating unit vector field $v$ on $%
S^{3} \subset \mathbb{R}^4$ such that $\left( \nabla v\right) _{e^{it}}=0$
for all $t\in \mathbb{R}$.
\end{proposition}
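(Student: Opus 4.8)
The plan is to write down an explicit field in Hopf coordinates and verify the criterion of Theorem \ref{bifoliate} in the case $\kappa = 0$, where the condition reads: the restriction of $(\nabla v)_p$ to $v(p)^\perp$ has no nonzero real eigenvalue. Identifying $\R^4 = \C^2$, I would use coordinates $(\eta,\xi_1,\xi_2)$ with $z_1 = \cos\eta\,e^{i\xi_1}$, $z_2 = \sin\eta\,e^{i\xi_2}$, $\eta\in[0,\pi/2]$, in which the round metric is $d\eta^2 + \cos^2\eta\,d\xi_1^2 + \sin^2\eta\,d\xi_2^2$, the point $e^{it}$ is $z_2 = 0$, and the target circle $C_1 \coloneqq \{e^{it}\}$ is the core locus $\eta = 0$. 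I would search among the unit fields $v = a(\eta)\,\partial_{\xi_1} + b(\eta)\,\partial_{\xi_2}$, constrained by $a^2\cos^2\eta + b^2\sin^2\eta = 1$.

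The key computation is to produce the Christoffel symbols of this warped metric and the three derivatives $\nabla_{\partial_\eta}v$, $\nabla_{\partial_{\xi_1}}v$, $\nabla_{\partial_{\xi_2}}v$. Setting $\alpha = a\cos\eta$ and $\beta = b\sin\eta$ (so $\alpha^2+\beta^2 = 1$), one finds that in a suitable orthonormal basis of $v(p)^\perp$ the restriction of $(\nabla v)_p$ is antidiagonal, with eigenvalues $\lambda$ satisfying $\lambda^2 = -ab\,(\alpha\beta' - \alpha'\beta)$. Writing $\alpha = \cos g(\eta)$, $\beta = \sin g(\eta)$ for a phase function $g$, this collapses to $\lambda^2 = -\tfrac{\sin 2g}{\sin 2\eta}\,g'(\eta)$, which is nonpositive on $(0,\pi/2)$ exactly when $g$ is nondecreasing with values in $[0,\pi/2]$. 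Any such $g$ thus yields a field satisfying Theorem \ref{bifoliate}(b) on the open patch where the coordinates are valid.

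I would then take $g$ smooth and nondecreasing with $g\equiv 0$ near $0$ and $g\equiv \pi/2$ near $\pi/2$. Near $C_1$ the field is then literally $v = (iz_1/|z_1|,\,0)$, and near $C_2 = \{z_1=0\}$ it is $v = (0,\,iz_2/|z_2|)$; each is smooth on its neighborhood because the coordinate singularity of the polar frame is avoided where the relevant coordinate is nonzero. Hence $v$ extends to a global smooth unit field, and the eigenvalue bound persists at $C_1$ and $C_2$ (there the restriction is in fact $0$). A short extrinsic computation of $\nabla_X v$ for $X$ tangent to $S^3$ along $C_1$, using $v = (iz_1/|z_1|,0)$, shows each such derivative is either zero or normal to $S^3$, so $(\nabla v)_{e^{it}} = 0$; invoking Theorem \ref{bifoliate} then finishes the proof.

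The main obstacle is reconciling two competing demands at the core circle: $v$ must be smooth there, yet $(\nabla v)$ must vanish identically on it. This rules out the naive profiles, since a phase with $g'(0)\neq 0$ forces a term behaving like $|z_2|$ times an angular field, which is not smooth at $C_1$. The flat-ended profile is precisely what resolves this: near $C_1$ the field equals the normalized generator of rotation in the $z_1$-factor, which is tangent to the geodesic $C_1$ and extrinsically ``locked,'' so all its tangential derivatives along $C_1$ are normal to the sphere and project to zero, while the monotonicity of $g$ supplies the eigenvalue condition throughout the transition annulus.
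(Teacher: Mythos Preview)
Your proposal is correct and takes essentially the same route as the paper: both work in the Hopf/Fermi coordinates $(z_1,z_2)=(\cos\eta\,e^{i\xi_1},\sin\eta\,e^{i\xi_2})$, take $v$ to be an $\eta$-dependent unit combination of $\partial_{\xi_1}$ and $\partial_{\xi_2}$, verify the criterion of Theorem~\ref{bifoliate} by computing that the eigenvalues of $(\nabla v)_p|_{v(p)^\perp}$ square to a nonpositive quantity (your $-\tfrac{\sin 2g}{\sin 2\eta}\,g'$ is exactly the paper's $-hh'/(\sin r\cos r)$ in the phase parametrization), and arrange the profile to be flat at $\eta=0$ so that $(\nabla v)$ vanishes along the core circle. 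The only cosmetic difference is the behavior at the far circle $\eta=\pi/2$: the paper glues to the Hopf field $p\mapsto \mathbf{i}p$ there (taking $h(r)=\sin r$ for $r$ near $\pi/2$), whereas your flat-ended profile $g\equiv\pi/2$ glues to $(0,iz_2/|z_2|)$ instead.
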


\begin{proof}
We identify $S^3$ with unit quaternions and we represent the standard basis
elements as $\left\{ \mathbf{1}, \mathbf{i}, \mathbf{j}, \mathbf{k }\right\}$%
. The idea is to smoothly interpolate between a third-order bifoliating
vector field near $e^{\mathbf{i }t}$ and the standard Hopf vector field away
from $e^{\mathbf{i }t}$.

We define Fermi coordinates centered at the unit speed geodesic $\gamma(t)
= e^{\mathbf{i }t} = (\cos t, \sin t, 0, 0)$, on the whole $S^3$ except the
great circles $C_{\mathbf{1 }\mathbf{i}}$ and $C_{\mathbf{j }\mathbf{k}}$
determined by $\operatorname{span}\left\{ \mathbf{1}, \mathbf{i}\right\}$ and $\operatorname{%
span}\left\{ \mathbf{j}, \mathbf{k}\right\}$.

Let $E=\left\{ \left( t,r,s\right) \in \mathbb{R}^{3}\mid 0<r<\pi /2\right\}$
and define $\varphi \colon E \rightarrow S^{3}$ by 
\[
\varphi \left(
t,r,s\right) =\cos r~e^{\mathbf{i }t}+\sin r~e^{\mathbf{i }s} \mathbf{j}  = (\cos r \cos t, \cos r \sin t, \sin r \cos s, \sin r \sin s).
\]

Let $h:\left( -\frac{\pi }{2},\frac{\pi }{2}\right) \rightarrow \mathbb{R}$
be a $C^{\infty }$ odd strictly increasing function such that $h\left(
0\right) =h^{\prime }\left( 0\right) =0$ and $h\left( r\right) =\sin r$ for $%
1<r<\frac{\pi }{2}$, and let $g=\sqrt{1-h^{2}}$. Define the unit vector
field $v$ on $S^{3}$ by%
\begin{equation*}
v\left( \varphi \left( t,r,s\right) \right) = g(r) \mathbf{i }e^{\mathbf{i }%
t} + h(r) e^{\mathbf{i }s} \mathbf{k }= (-g(r)\sin t, g(r) \cos t, -h(r)
\sin s, h(r) \cos s). 
\end{equation*}
and $v(p) = \mathbf{i }p$ for $p \in C_{\mathbf{1 }\mathbf{i}} \cup C_{%
\mathbf{j }\mathbf{k}}$. Notice that $v$ coincides with the Hopf vector
field $q\mapsto \mathbf{i }q$ on an open neighborhood of $C_{\mathbf{j }%
\mathbf{k}}$, and so it is smooth there.  Smoothness at $C_{\one \ii}$ is not difficult to verify explicitly.  Moreover, the vector field is invariant
by rotations $R_{2\theta }(q) =e^{\mathbf{i }\theta }qe^{-\mathbf{i }\theta }
$ around $\gamma $ and transvections $T_{2\tau }(q) =e^{\mathbf{i}\tau }qe^{%
\mathbf{i}\tau }$ along $\gamma $. Therefore it suffices to check the
bifoliating property only when $t=s=0$. We compute the partial derivatives
of $\varphi$ at points $\rho \coloneqq (0,r,0)$: 
\begin{equation*}
\varphi _t(\rho) =\cos r~\mathbf{i}, \ \ \ \ \varphi _r(\rho) =-\sin r~%
\mathbf{1 }+\cos r~\mathbf{j}, \ \ \ \ \text{and} \ \ \ \ \varphi_s(\rho)
=\sin r~\mathbf{k};
\end{equation*}
these form a basis $\mathcal{B}$ of $T_{p}S^{3}$, where $p=\varphi (\rho)$.
We will compute $(\nabla v)_p$ in the basis $\mathcal{B}$.

Let $P:\mathbb{H}\rightarrow T_{p}S^{3}=p^{\perp }$ be the orthogonal
projection. A straightforward computation gives $P(\mathbf{1}) = \langle 
\mathbf{1}, \varphi_r(\rho) \rangle \varphi_r(\rho) = - \sin
r~\varphi_r(\rho)$ and similarly $P(\mathbf{j}) = \cos r~\varphi_r(\rho)$.

We compute 
\begin{eqnarray*}
\left( \nabla _{\varphi _{t}\left( \rho \right) }v\right) _{p} &=&\left. 
\tfrac{D}{dt}\right\vert _{0}v\left( \varphi \left( t,r,0\right) \right) \\
&=&P\left( \left. \tfrac{d}{dt}\right\vert _{0}v\left( \cos r~e^{\mathbf{i }%
t}+\sin r~\mathbf{j }\right) \right) \\
&=&P\left( \left. \tfrac{d}{dt}\right\vert _{0}g\left( r\right) \mathbf{i }%
e^{\mathbf{i }t}+h\left( r\right) \mathbf{k }\right) \\
&=&P\left( -g\left( r\right) \mathbf{1 }\right) \\
&=&g\left( r\right) \sin r~\varphi _{r}\left( \rho \right) \text{.}
\end{eqnarray*}%
We compute $\left( \nabla _{\varphi _{r}\left( \rho \right) }v\right) _{p}$
and $\left( \nabla _{\varphi _{s}\left( \rho \right) }v\right) _{p}$ in the
same way, obtaining%
\begin{equation*}
\left[ \left( \nabla v\right) _{p}\right] _{\mathcal{B}}=\left( 
\begin{array}{ccc}
0 & g^{\prime }\left( r\right) \sec r & 0 \\ 
g\left( r\right) \sin r & 0 & -h\left( r\right) \cos r \\ 
0 & h^{\prime }\left( r\right) \csc r & 0%
\end{array}%
\right) \text{,}
\end{equation*}%
for which the eigenvalues are $0$ and $\pm \sqrt{g\left( r\right) g^{\prime
}\left( r\right) \tan r-h\left( r\right) h^{\prime }\left( r\right) \cot r}$%
. Using the fact that $gg^{\prime }=-hh^{\prime }$, the radicand is equal to 
\begin{equation*}
-\frac{h(r)h^{\prime }(r)}{\sin r \cos r}, 
\end{equation*}
which is negative for $0 < r < \frac{\pi}{2}$.

On the other hand, a similar computation yields $(\nabla v)_{(0,0,0)}$ is
identically zero. Therefore $v$ is bifoliating and $(\nabla v)_{e^{\mathbf{i 
}t}} = 0$.
\end{proof}

\section{Outer billiards and preservation of volume}

Let $U$ be the exterior of a complete umbilic not totally geodesic
hypersurface $N$ in $M_\kappa$. Recall that if a smooth unit vector field $v$
bifoliates $U$, then a smooth invertible outer billiard map $B \colon U \to U$
is well-defined by (\ref{eqn:outerbilliarddef}).  Using the notation of the proof of Theorem \ref{bifoliate},  we write
$B=F_{+} \circ g \circ (F_{-})^{-1}$, where $g:N\times (-T_{\kappa },0)\rightarrow N\times (0,T_{\kappa })$ is the
smooth function given by $g(p,t)=(p,-t)$. 
The fact that $B$ is a diffeomorphism follows from the proof of Theorem \ref{bifoliate}.  To prove Theorem \ref{volume}, we compute the differential of $B$.

\begin{proof}[Proof of Theorem \protect\ref{volume}]
For $p\in N$, let $h\neq 0$ and let $\{u_{i}\,|\,i=1,\dots ,n\}$ be an
orthonormal basis of $T_{p}N$, such that $u_{1}=v\left( p\right) $ and $%
\mathcal{B}=\{u_{1},\dots ,u_{n},H(p)/h\}$ is a positively oriented
orthonormal basis of $T_{p}M_{\kappa }$. Now, for $s\in (-T_{\kappa
},T_{\kappa })$, let $\mathcal{B}_{s}$ be the basis of $T_{\gamma
_{v(p)}(s)}M_{\kappa }$ given by the parallel transport of $\mathcal{B}$
along $\gamma _{v(p)}$ between 0 and $s$. Since the parallel transport is an
isometry between the corresponding tangent spaces and preserves the
orientation, the bases $\mathcal{B}_{s}$ are positively oriented and
orthonormal as well.

Besides, we consider the basis of $T_{(p,s)}(N\times \mathbb{R})$ defined by 
\begin{equation*}
\mathcal{C}_{s}=\left\{ (u_{1},0),\dots ,(u_{n},0),\left( 0,\left. \tfrac{%
\partial }{\partial r}\right\vert _{s}\right) \right\} .
\end{equation*}%
Computing, we obtain 
\begin{equation*}
dF_{(p,s)}(u_{i},0)=J_{u_{i}}(s) \hspace{0.3cm}\text{and }\hspace{0.3cm}%
dF_{(p,s)}\left( 0,\left. \tfrac{\partial }{\partial r}\right\vert
_{s}\right) =\gamma _{v(p)}^{\prime }(s),
\end{equation*}%
where $J_{u_{i}}$ are as in (\ref{Jx}). Hence, the matrix of $dF_{(p,s)}$
with respect to the pairs of bases $\mathcal{C}_{s},\,\mathcal{B}_{s}$ is%
\begin{equation}
\lbrack dF_{(p,s)}]_{\mathcal{C}_{s},\mathcal{B}_{s}}=\left( 
\begin{array}{ccc}
1 & 0_{n-1}^{T} & 1 \\ 
s_{\kappa }(s)b & c_{\kappa }(s)\,I_{n-1}+s_{\kappa }(s)A & 0_{n-1} \\ 
s_{\kappa }(s)h & 0_{n-1}^{T} & 0%
\end{array}%
\right) \text{,}  \label{matrix dF}
\end{equation}%
where $A$ is the matrix of $(\nabla v)|_{v(p)^{\perp }}$ with respect to the
basis $\{u_{i}\,|\,i=2,\dots ,n\}$, $b$ is the column vector whose
entries are the coordinates of $\nabla _{v(p)}v$ with respect to the same basis, $0_m$ is the null column vector, and $I_m$ denotes the
identity $m \times m$ matrix.

Now, we fix $q\in U$ and let $p\in N$ and $t\in (0,T_{\kappa })$ such that $%
F_{-}(p,-t)=q$. We want to compute the determinant of $[dB_{q}]_{\mathcal{B}%
_{-t},\mathcal{B}_{t}}$. We observe that 
\begin{equation*}
dB_{q}=(dF_{+})_{(p,t)}\circ (dg)_{(p,-t)}\circ (dF_{-}^{-1})_{q}.
\end{equation*}

An easy computation shows that $\det [(dg)_{(p,-t)}]_{\mathcal{C}_{-t},%
\mathcal{C}_{t}}=-1$. Using (\ref{matrix dF}), for $0\neq s\in (-T_{\kappa
},T_{\kappa })$, we obtain 
\begin{eqnarray*}
\det \,[dF_{(p,s)}]_{\mathcal{C}_{s},\mathcal{B}_{s}} &=&s_{\kappa }(s)h\det
\,(c_{\kappa }(s)\,I_{n-1}+s_{\kappa }(s)A) \\
&=&h(s_{\kappa }(s))^{n}\det \,(\cot _{\kappa }(s)\,I_{n-1}+A).
\end{eqnarray*}%
Calling $P$ the characteristic polynomial of $(\nabla v)|_{v(p)^{\perp }}$,
since $\cot _{\kappa }(-s)=-\cot _{\kappa }(s)$, the above equality can be
written as 
\[
\det \,[dF_{(p,s)}]_{\mathcal{C}_{s},\mathcal{B}_{s}}=h(s_{\kappa
}(s))^{n}P(\cot _{\kappa }(-s)).
\]

Using that $(dF_{-}^{-1})_{q}=((dF_{-})_{(p,-t)})^{-1}$ and $s_{\kappa }$ is
an odd function, we obtain 
\[
\lbrack dB_{q}]_{\mathcal{B}_{-t},\mathcal{B}_{t}}=(-1)^{n+1}P(\cot
_{\kappa }(-t))/P(\cot _{\kappa }(t))\text{,}
\]
where the expression is well defined since $v$ is a bifoliating vector field
of $N$ (see part (c) of Theorem \ref{bifoliate}). Besides, the image of $%
(\nabla v)_{p}$ is contained in $v(p)^{\perp }$. Thus, the characteristic
polynomial $Q_{p}$ of $(\nabla v)_{p}$ satisfies $Q_{p}(s)=sP(s)$, for all $%
s $.  In consequence, since the image of $\cot _{\kappa }$ is open in the set of
real numbers, $B$ preserves the volume form if and only if $%
Q_{p}(s)=(-1)^{n}Q_{p}(-s)$ for all $s$.
\end{proof}

\begin{proof}[Proof of Corollary \protect\ref{cor:volume}]
For all parts of Corollary \ref{cor:volume}, it is useful to write 
\begin{align}  \label{eqn:qps}
Q_p(s) = s^n - \trace\left(\nabla v\right) _{p} s^{n-1} +
c_{n-2}(p)s^{n-2} + \cdots + c_2(p)s^2 + c_1(p)s,
\end{align}
where the term $c_0(p) =
\left( -1\right) ^{n}\det \left( \nabla v\right) _{p}$ vanishes since $v$ is unit.

To verify part (a), we apply Theorem \ref{volume} to equation (\ref{eqn:qps}%
). In particular, if $B$ preserves volume, the parity of each $Q_p(s)$
matches that of $n$. Thus the coefficient of $s^{n-1}$, namely $\trace\left(\nabla v\right) _{p}$, vanishes for all $p$, and so $\operatorname{div}(v)$ vanishes identically.

If additionally $n$ is even, then so is each $Q_p$, hence the linear
coefficient $c_1(p)$ is identically $0$.  Therefore $0$ is eigenvalue of $(\nabla v)_p$ with algebraic multiplicity $2$, so by item (b) in the proof of Lemma \ref{lem:linalg},  the restriction of $(\nabla v)_p$
to $v(p)^\perp$ has a zero eigenvalue. 

For part (b), let $n = 2$, so that $v(p)^\perp \subset T_{p}N$ has dimension
one for all $p \in N$.

``$(i) \Rightarrow (ii)$'': Let $u$ be a unit vector field on $N$ such that $%
u(p) \perp v(p) $ for all $p\in N$. Since $u$ is unit, $\langle \nabla
_{u}u,u\rangle =0$. Since $B$ is volume-preserving, $\nabla _{u}v=0$ by part
(a). Therefore we have 
\begin{equation*}
\left\langle \nabla _{u}u,u\right\rangle =0\text{\ \ \ \ and \ \ \ }%
\left\langle \nabla _{u}u,v\right\rangle =u\left\langle u,v\right\rangle
-\left\langle u,\nabla _{u}v\right\rangle =0\text{.}
\end{equation*}%
Since $\left\{ u,v\right\} $ is an orthonormal frame, $u$ is a geodesic
field and so $v$ is orthogonal to a line foliation of $N$.

``$(ii) \Rightarrow (i)$'': If $v$ is orthogonal to a geodesic foliation of $%
N$ given by a unit vector $u$, we have that%
\begin{equation*}
\left\langle \nabla _{u}v,u\right\rangle =u\left\langle u,v\right\rangle
-\left\langle \nabla _{u}u,v\right\rangle =0\text{.}
\end{equation*}%
Since $v$ is unit, $\left\langle \nabla _{w}v,v\right\rangle =0$ for all $w$%
. Then the matrix of $\nabla v$ with respect to the basis $\left\{
u,v\right\} $ is strictly upper triangular. Thus, $Q_p(s) =s^{2}$ for all $p$, and so 
$B$ preserves volume by part (a).

``$(i) \Leftrightarrow (iii)$'': Since $n=2$, equation (\ref{eqn:qps}) can
be written as 
\begin{equation*}
Q_p(s) = s^2 - \trace \left(\nabla v\right) _{p} s. 
\end{equation*}
The volume-preservation condition and the divergence-free condition both
correspond to the vanishing of the coefficient of $s$.

Now if $N \subset H^3$ is a horosphere, the claim is immediate since $N$,
with the intrinsic metric, is isometric to $\mathbb{R}^2$.


To verify item (c), observe that for $n=3$ equation (\ref{eqn:qps}) can be
written as 
\begin{equation*}
Q_p(s) = s^3 - \trace \left(\nabla v\right) _{p} s^2 + c_1 s,
\end{equation*}
and the volume-preservation condition and the divergence-free condition both
correspond to the vanishing of the coefficient of $s^2$. The final assertion
follows from the main result in \cite{GluckGu} (see also \cite%
{HarrisPaternain} and Proposition 1 in \cite{PeraltaSalas}), which states
that the only great circle fibrations of $S^3$ with volume-preserving flows are the
Hopf fibrations.
\end{proof}

\section{A Hopf-like bifoliating vector field on $\mathbb{R}^3 \subset H^4$}

Here we give an example of a unit vector field on a horosphere in the hyperbolic 
$4$-space bifoliating the exterior, and we find explicit periodic orbits, unbounded
orbits, and bounded nonperiodic orbits of the associated billiard map.

We consider the upper half-space model
 $H=\left\{ \left( p_{0},p_{1},\dots
,p_{n}\right) \in \mathbb{R}^{n+1}\mid p_{0}>0\right\} $ of the $\left(
n+1\right) $-dimensional hyperbolic space of constant curvature $-1$. For $0<h\leq 1$ define the
horospheres $H_h = \left\{ p \in H \mid p_0=h\right\}$. Let $v$ be a
bifoliating vector field on $N \coloneqq H_1$, so the associated billiard
map $B$ is well-defined on $U=\left\{p \in H \mid 0<p_0<1 \right\}$ and
preserves each horosphere $H_{h}$ for $0<h<1$. Keeping in mind that $H_h
\simeq \mathbb{R}^n$, we consider the following flow.

\begin{definition}
Let $v$ be a unit vector field on $\mathbb{R}^{n}$ and $\delta >0$. The 
\textbf{associated }$\left( v,\delta \right) $\textbf{-flow} is the discrete
flow on $\mathbb{R}^{n}$ generated by the map 
\begin{equation*}
f(v,\delta) \colon \mathbb{R}^n \to \mathbb{R}^n, \hspace{.25in}
f(v,\delta)(p) =p+\delta v(p). 
\end{equation*}
\end{definition}

Observe that for $\delta >0$ and $m\in \mathbb{N}$, both small, $%
f(v,\delta)^{m}(p)$ approximates 
the integral curve of $v$ which emanates from $p$.

It is convenient to write the billiard map $B$ in terms of $f$. Fix $0<h<1$
and let $r=\sqrt{1-h^{2}}$. We identify $H_{h}$ with $\mathbb{R}^{n}$ in the
obvious way.

\begin{proposition}
\label{Bh}The restriction $B_{h}$ of $B$ to $H_{h}$ equals 
\[
f(v\circ \left( \id-rv\right) ^{-1},2r).
\]
\end{proposition}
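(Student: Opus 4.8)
The plan is to compute explicitly how the billiard map $B$ acts on a point of $H_h$, by tracing the geometry of the bifoliating construction in the upper half-space model. The billiard map sends $\gamma_{v(p)}(-t)$ to $\gamma_{v(p)}(t)$, so the key is to understand the geodesics of $H^{n+1}$ tangent to the horosphere $N = H_1$ with initial velocity $\pm v(p)$, and to determine where such a geodesic crosses the intermediate horosphere $H_h$. I would first recall that in the upper half-space model, the geodesics tangent to the horosphere $p_0 = 1$ are Euclidean semicircles (or vertical lines, in the degenerate constant-direction case), and that $v(p)$, being tangent to $N$, is a horizontal unit vector at the point $p \in H_1$.

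\medskip

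The central computation is to parametrize the tangent geodesic $\gamma_{v(p)}$ and identify its two intersection points with $H_h$. Since $v(p)$ is tangent to $N$ at $p = (1, p_1, \dots, p_n)$, the geodesic leaving $p$ in direction $v(p)$ is a Euclidean semicircle in the vertical $2$-plane spanned by $v(p)$ and the $p_0$-axis, whose highest point is $p$ itself (the tangency forces the center of the semicircle to lie directly below $p$ on the boundary). I would set up the radius of this semicircle — it is $1$, since the top of the semicircle sits at height $1$ and the center on the ideal boundary — and then compute the two points at Euclidean height $h$. Because the circle has radius $1$ centered at the foot of $p$, the horizontal displacement from $p$ to either intersection with $H_h$ is exactly $\pm\sqrt{1 - h^2} = \pm r$. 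This is the geometric origin of the number $r$ and the step size $2r$. The two feet of the tangency geodesic on $H_h$, written in the $\mathbb{R}^n$-identification, are therefore $p - r\,v(p)$ (the backward point $\gamma_{v(p)}(-t)$) and $p + r\,v(p)$ (the forward point $\gamma_{v(p)}(t)$).

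\medskip

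Once this is in hand, the billiard map on $H_h$ is described as follows: given a point $\overline p \in H_h \simeq \mathbb{R}^n$, I must find the unique $p \in N$ such that $\overline p = p - r\,v(p)$; then $B_h(\overline p) = p + r\,v(p) = \overline p + 2r\,v(p)$. The equation $\overline p = p - r\,v(p)$ says precisely that $\overline p = (\operatorname{id} - r v)(p)$, so $p = (\operatorname{id} - rv)^{-1}(\overline p)$, and the map $\operatorname{id} - rv$ is invertible exactly because $v$ bifoliates (the backward foliation gives a bijection between $N$ and $H_h$, realized in coordinates by this displacement map). Substituting, $B_h(\overline p) = \overline p + 2r\, v\bigl((\operatorname{id} - rv)^{-1}(\overline p)\bigr)$, which is by definition $f\bigl(v \circ (\operatorname{id} - rv)^{-1},\, 2r\bigr)(\overline p)$, as claimed.

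\medskip

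\textbf{The main obstacle} I expect is justifying the invertibility and smoothness of $\operatorname{id} - rv$ on $\mathbb{R}^n$ cleanly, and correctly matching the backward/forward conventions so that the displacement comes out as $-rv$ and $+rv$ rather than with signs or factors mismatched. The invertibility should follow directly from the fact that $v$ bifoliates $U$: the backward foliation means the map $F_-$ (in the notation of the proof of Theorem \ref{bifoliate}) restricts to a diffeomorphism from $N$ onto $H_h$, and in the half-space coordinates this restriction is exactly $p \mapsto p - rv(p) = (\operatorname{id} - rv)(p)$, so its inverse exists and is smooth. The only real care needed is verifying that the Euclidean radius of the tangency semicircle is genuinely $1$ and that the horizontal offset at height $h$ is exactly $r = \sqrt{1-h^2}$ — a short computation with the hyperbolic metric $ds^2 = (dp_0^2 + \cdots + dp_n^2)/p_0^2$ that pins down the semicircle through $p$ tangent to $N$.
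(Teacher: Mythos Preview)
Your proposal is correct and follows essentially the same route as the paper's proof: both identify the tangent geodesic as the vertical Euclidean semicircle of radius $1$ centered at the foot of $p$, read off the two intersections with $H_h$ as $p'\pm r\,v(p')$, and then invert $\operatorname{id}-rv$ to express $B_h$ in the form $f(v\circ(\operatorname{id}-rv)^{-1},2r)$. The only cosmetic difference is that the paper justifies invertibility of $\operatorname{id}-rv$ by noting (via Theorem~\ref{bifoliate}) that $-1/r$ is not an eigenvalue of $\nabla v$, whereas you appeal directly to the diffeomorphism $F_-\vert_{N\to H_h}$ furnished by the bifoliation; these are two phrasings of the same fact.
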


\begin{proof}
We first observe that $\left(\id - rv\right)^{-1}$ exists due to
Theorem \ref{bifoliate}; in particular, the Inverse Function Theorem applies
to $(\id - rv)$ because $-\frac{1}{r}$ is not an eigenvalue of $%
\nabla v$.

Let $q=\left( h,q^{\prime }\right) \in U$. By Theorem \ref{bifoliate}, there
exists $p=\left( 1,p^{\prime }\right) \in N$ such that $q=\gamma _{v\left(
p\right) }(s)$ for some $s<0$. Now, the image of $\gamma _{v\left( p\right) }
$ is the vertical semicircle centered at $\left( 0,p^{\prime }\right) $
tangent to $N$ at $p$ and containing $q$ (this was depicted in Figure \ref{fig:hyper} in Section \ref{sec:statement}).  Identifying $v(p)$ with $%
v(p^{\prime })$, we have $q^{\prime }=p^{\prime }-rv(p^{\prime })$. So $B(q)
=(h,p^{\prime }+rv( p^{\prime }))$ and hence, $B_{h}(q^{\prime })=p^{\prime
}+rv(p^{\prime })$, which equals 
\begin{equation*}
q^{\prime }+2rv(p^{\prime }) =f\left( v\circ \left( \id -r
v\right) ^{-1},2r\right)(q^{\prime }), 
\end{equation*}
since $\left( \text{id}-rv\right) ^{-1}( q^{\prime }) =p^{\prime }$.
\end{proof}

Now let $V$ be the Hopf vector field on $S^{3}$ given by $%
V\left(t,x,y,z\right) =\left( -x,t,-z,y\right)$, and consider the
stereographic projection 
\begin{equation*}
F\colon S^3 - \left\{(1,0,0,0)\right\} \to \mathbb{R}^3, \hspace{.25in}
F(t,x,y,z) = \frac{1}{1-t}(x,y,z). 
\end{equation*}
Let $v$ be the induced unit vector field on $\mathbb{R}^{3}$, which can be
written explicitly as 
\begin{equation*}
v(x,y,z) =\frac{1}{x^{2}+y^{2}+z^{2}+1}%
\left(x^{2}-y^{2}-z^{2}+1,2xy+2z,2xz-2y\right). 
\end{equation*}
It is invariant by rotations around the $x$-axis, that is, $v\circ R=dR\circ
v$ for any rotation $R$ fixing the $x$-axis. The circle $x=0$, $%
y^{2}+z^{2}=1 $ and the $x$-axis are images of integral curves of $v$.  See Figure \ref{fig:hopfstereo}.

\begin{figure}[ht!]
\centerline{
\includegraphics[width=4in]{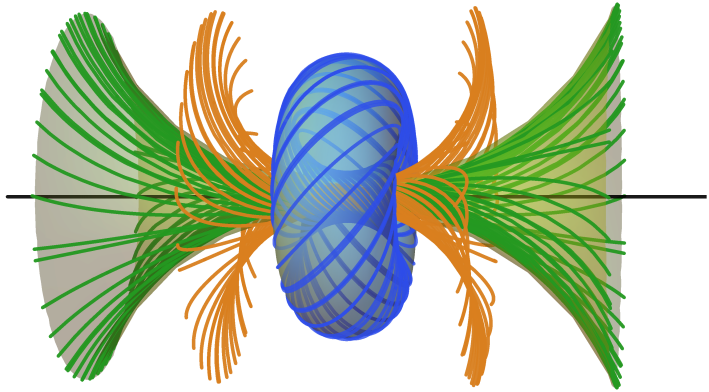}
}
\caption{Stereographic projection of the Hopf fibration; the straight line is the $x$-axis}
\label{fig:hopfstereo}
\end{figure}

We use the notation for the upper half space model established above; in
particular $H_h = \left\{p \in H \mid p_0 = h \right\}$, $N = H_1$, and $U =
\left\{p \in H \mid 0<p_0<1 \right\}$.

\begin{proposition}
\label{prop:hopfr3} The unit vector field $v$ defined above, considered on
the horosphere $N \simeq \mathbb{R}^{3} \subset H^{4}$, bifoliates $U$.
Moreover, for the associated billiard map $B \colon U \to U$ and fixed $h
\in (0,1)$, we have: 
\begin{compactenum}[(a)]
\item For $m \in \Z$, $B^m(h,x,0,0)=(h,x+2m\sqrt{1-h^{2}},0,0)$; in particular the orbit of $(h,x,0,0)$ is unbounded.
\item The map $B$ preserves the circle $C_h = \left\{(h,x,y,z) \mid y^2+z^2=2-h^2, x=0 \right\}$. 
\item The restriction of $B$ to $C_h$ is a rotation by angle $\theta_h=2\arctan\left(\sqrt{1-h^2}\right)$, and hence the orbits on $C_h$ are periodic if and only if $\theta_h$ is a rational multiple of $\pi$.
\end{compactenum}
\end{proposition}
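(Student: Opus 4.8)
The plan is to first establish that $v$ bifoliates $U$ using the eigenvalue criterion of Theorem \ref{bifoliate}, and then to read off the three dynamical statements from the concrete description of the billiard map in Proposition \ref{Bh}. Throughout I would work in the $\mathbb{R}^3$ identification of the horosphere and set $r=\sqrt{1-h^2}$, so that $B_h=f\!\left(v\circ(\id-rv)^{-1},2r\right)$; concretely, to evaluate $B_h$ at a point $q'$ one solves $q'=p'-rv(p')$ for $p'$ and sets $B_h(q')=p'+rv(p')$.

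For the bifoliation claim, since $\kappa=-1$, part (c)(iii) of Theorem \ref{bifoliate} requires every real eigenvalue $\lambda$ of $(\nabla v)_p$ to satisfy $\lambda^2\le 1$. Because $v$ is invariant under rotations about the $x$-axis, it suffices to verify this along a single meridian half-plane, say $\{(x,y,0)\mid y\ge 0\}$: there I would compute the Euclidean Jacobian of the explicit rational expression for $v$, restrict it to $v(p)^{\perp}$, and inspect the resulting characteristic polynomial. I expect this to be the most computational step, though a routine one. A conceptually cleaner alternative is to exploit that $v$ is the Euclidean normalization of the stereographic pushforward of the $g_{S^3}$-geodesic Hopf field, and to convert the geodesic equation $\nabla^{g_{S^3}}_V V=0$ into the required bound on $\nabla^{\mathrm{E}}v$ via the conformal change-of-connection formula with factor $\lambda=2/(1+|x|^2)$.

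For part (a) I would observe that on the $x$-axis the formula for $v$ collapses to the constant field $(1,0,0)$, so that $(\id-rv)^{-1}(x,0,0)=(x+r,0,0)$ and hence $B_h(x,0,0)=(x+2r,0,0)$; iterating gives $B^m(h,x,0,0)=(h,x+2mr,0,0)$, which is unbounded since $r>0$. For parts (b) and (c), the key observation is that on the unit circle $x=0$, $y^2+z^2=1$ one has $v(0,y,z)=(0,z,-y)$, the unit tangent to the circle with vanishing $x$-component. Consequently $\id-rv$ acts on the plane $x=0$ as the linear map with matrix $\left(\begin{smallmatrix}1&-r\\ r&1\end{smallmatrix}\right)$, which carries the unit circle bijectively onto the circle of radius $\sqrt{1+r^2}=\sqrt{2-h^2}$, namely $C_h$. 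Thus every $q'\in C_h$ has preimage $p'=(\id-rv)^{-1}(q')$ on the unit circle, and $B_h(q')=p'+rv(p')$, obtained by applying $\left(\begin{smallmatrix}1&r\\ -r&1\end{smallmatrix}\right)$, again lands on $C_h$; this proves (b).

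For (c) I would parametrize $p'=(0,\cos\phi,\sin\phi)$ and write each of the two matrices above as $\sqrt{1+r^2}$ times a rotation by $\pm\arctan r$. Then $q'$ sits at angular coordinate $\phi+\arctan r$ on $C_h$ while $B_h(q')$ sits at $\phi-\arctan r$, so $B_h|_{C_h}$ is rotation by $-2\arctan r$, whose orbits are periodic precisely when $\theta_h=2\arctan\sqrt{1-h^2}$ is a rational multiple of $\pi$. The main obstacle is the bifoliation eigenvalue computation; the dynamical statements (a)–(c) are clean consequences of the fact that $v$ degenerates to especially simple forms on the two distinguished integral curves (the $x$-axis and the unit circle), where $B_h$ reduces to a translation and a rotation, respectively.
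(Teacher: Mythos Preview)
Your proposal is correct and follows essentially the same route as the paper: rotational symmetry reduces the eigenvalue check to the plane $z=0$, where the paper computes the Jacobian of $v$ explicitly and finds that the only real eigenvalue of $(\nabla v)_p$ is $0$ (the nonreal ones being $\frac{2(x\pm i)}{1+x^2+y^2}$), and parts (a)--(c) are read off from Proposition~\ref{Bh} exactly as you describe. The only cosmetic difference is in (b)--(c): the paper computes $B_h$ at the single point $(0,1,r)$ and invokes rotational symmetry, whereas you package $\id\pm rv$ on the plane $x=0$ as the $2\times 2$ matrices $\left(\begin{smallmatrix}1&\mp r\\ \pm r&1\end{smallmatrix}\right)$ and identify them as scaled rotations; both arguments are the same computation in different clothing.
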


\begin{proof}
To show that $v$ bifoliates $U$, it suffices by Theorem \ref{bifoliate} to
show that for all $p\in N$, the only real eigenvalue of $\left( \nabla
v\right) _{p}$ is $0$. Since $v$ is invariant by rotations about the $x$%
-axis, we only need to consider $p$ in the plane $z=0$. Now, the matrix of $%
\left( \nabla v\right) _{\left( x,y,0\right) }$ with respect to the
canonical basis is $\frac{2}{(1+x^2+y^2)^2}\,A$, where 
\begin{equation*}
A=\left( 
\begin{array}{ccc}
2xy^{2} & -2y(1+x^{2}) & 0 \\ 
y(1-x^{2}+y^{2}) & x(1+x^{2}-y^{2}) & 1+x^{2}+y^{2} \\ 
2xy & -(1+x^{2}-y^{2}) & x(1+x^{2}+y^{2})%
\end{array}%
\right) \text{.}
\end{equation*}%
Since $v$ is a unit vector field, $v$ is in the kernel of $A^{T}$, and so $v$
is an eigenvector of $A^{T}$ with associated eigenvalue $0$. The other two
eigenvalues of $A^{T}$ are $(1+x^{2}+y^{2})(x\pm i)$, with eigenvectors $%
(\mp iy,1\pm ix,x\mp i)$. Consequently, 0 is the only real eigenvalue of $%
\left( \nabla v\right) _{\left( x,y,0\right) }$ and so $v$ bifoliates $U$ by
Theorem \ref{bifoliate}.

Now using Proposition \ref{Bh} and the definition of $f$, we write 
\begin{equation*}
B_h(x,y,z) = f(v \circ (\text{id} - rv)^{-1},2r)(x,y,z) = (x,y,z) + 2r
v\left(\left(\text{id}-rv\right)^{-1}\left(x,y,z\right)\right), 
\end{equation*}
where $r = \sqrt{1-h^2}$.

To verify part (a), we use the fact that $(\id-rv)(x+r,0,0) =
(x,0,0)$, and we compute 
\begin{equation*}
B_h(x,0,0) = (x,0,0) + 2rv(x+r,0,0) = (x+2r,0,0) = (x+2\sqrt{1-h^2},0,0). 
\end{equation*}
The formula for $B^m_h(x,0,0)$ follows inductively.

To verify part (b), we use the fact that $(\id-rv)(0,1,0) =
(0,1,r)$, and we compute 
\begin{equation*}
B_h(0,1,r) = (0,1,r) + 2r v(0,1,0) = (0,1,-r). 
\end{equation*}
Therefore $B_h(0,1,r)$ has the same norm as $(0,1,r)$, and by the rotational
symmetry, this is true for all $(x,y,z) \in C_h$. Therefore $B_h$ is
invariant on $C_h$.

Part (c) follows from the observation that the angle $\theta_h$ subtending
the arc between $(0,1,r)$ and $(0,1,-r)$ is equal to $2\arctan(\sqrt{1-h^2})$%
. 
\end{proof}

\section{Further comments and questions}

We have classified bifoliating vector fields by an infinitesimal condition, and we have seen that prototypical examples of bifoliating vector fields are given by geodesic vector fields.  We conclude with several compelling questions regarding the topology and geometry of bifoliating vector fields and the dynamics of their associated outer billiards.

\begin{question}
\label{ques:defret}
Does the space of bifoliating vector fields on $N$ deformation retract to the space of geodesic vector fields on $N$?
\end{question}

In \cite{GW}, Gluck and Warner showed that the space of great circle fibrations of $S^3$ deformation retracts to its subspace of Hopf fibrations, so a positive answer to this question for $N = S^3 \subset S^4$ would provide a full topological classification of bifoliating vector fields.  Moreover, by Gray stability (see \cite{Geiges}, Theorem 2.2.2), it would give a tightness result for the contact structures induced by bifoliating vector fields.

On the other hand, the contact structure associated to a bifoliating vector field on $S^3$ naturally induces a symplectic structure $\omega$ on $S^3 \times (0,\infty) \simeq U$. 

\begin{question} Under what conditions on $v$ is the associated outer billiard map $B$ symplectic with respect to the induced symplectic structure $\omega$ on $U$?
\end{question}

Of course, the symplectic structure induced by the Hopf vector field is the standard one, and the corresponding outer billiard map is symplectic.  

We have seen that the volume-preservation of $B$ is related to the volume-preservation of $v$ itself, and we have seen that these conditions are equivalent for $n=2$ and $n=3$.

\begin{question} What is the relationship between volume-preservation of $B$ and the volume-preservation of $v$ in higher dimensions?
\end{question}

More specifically, in light of Theorem \ref{volume}, the volume-preservation condition of $B$, which is given by the vanishing of several coefficients of $Q_p(s)$, is a priori much stronger than the volume-preservation condition of $v$, which is given by the vanishing of a single coefficient of $Q_p(s)$.   However, we do not have an example of a divergence-free bifoliating vector field for which $B$ does not preserve volume.

On the other hand, we have seen from \cite{GluckGu} that a geodesic vector field on $S^3$ preserves volume if and only if it is Hopf. 

\begin{question} Is there a non-geodesic bifoliating vector field $v$ on $S^3$ such that $v$ (and hence $B$) preserves volume?
\end{question}

We are especially interested in understanding the dynamics of bifoliating outer billiard systems, for example in low dimensions.

\begin{question}  Does there exist a bifoliating vector field on a horosphere in $H^3$ such that the associated outer billiard map has a periodic orbit?
\end{question}

By Proposition \ref{Bh}, each orbit of $B_{h}$ describes a polygonal line in $H_{r} \simeq \R^2$, with edges of length $2r$. As $h$ tends to $1$, these lines approach in a certain sense the integral curves of $v$, which cannot be closed. We believe that the condition on the eigenvalues of $\nabla v$ prevents these polygonal lines from being closed.

\vspace{0.5cm}

\noindent Yamile Godoy\newline
\noindent Conicet\,-\,Universidad Nacional de C\'ordoba\newline
\noindent CIEM\thinspace -\thinspace FaMAF\newline
\noindent Ciudad Universitaria, 5000 C\'{o}rdoba, Argentina\newline
\noindent yamile.godoy@unc.edu.ar

\vspace{0.5cm}

\noindent Michael Harrison\newline
\noindent Institute for Advanced Study\newline
\noindent 1 Einstein Drive\newline
\noindent Princeton, NJ 08540, US\newline
\noindent mah5044@gmail.com

\vspace{0.5cm}

\noindent Marcos Salvai\newline
\noindent Conicet\,-\,Universidad Nacional de C\'ordoba\newline
\noindent CIEM\thinspace -\thinspace FaMAF\newline
\noindent Ciudad Universitaria, 5000 C\'{o}rdoba, Argentina\newline
\noindent salvai@famaf.unc.edu.ar


\begin{thebibliography}{99}
\bibitem{BeckerGeiges} T. Becker, H. Geiges. The contact structure induced
by a line fibration of $\mathbb{R}^3$ is standard. \emph{Bull. Lond. Math.
Soc.}, 53 (2021) 104--107.


\bibitem{docarmo} M. do Carmo. \emph{Riemannian geometry}, Springer (1992).

\bibitem{DT} F. Dogru, S. Tabachnikov. Dual billiards. \emph{Math.
Intelligencer} 27 (2005) 18--25.

\bibitem{Geiges} H. Geiges. \emph{An Introduction to Contact Topology}, Cambridge University Press (2008).

\bibitem{Gluck} H. Gluck. Great circle fibrations and contact structures on
the $3$-sphere, arxiv:1802.03797.

\bibitem{GluckGu} H. Gluck, W. Gu. Volume-preserving great circle flows on
the 3-sphere. \emph{Geom. Dedicata}, 88 (2001) 259--282.

\bibitem{GW} H. Gluck, F. Warner. Great circle fibrations of the
three-sphere., \emph{Duke Math. J.} 50 (1983) 107--132.

\bibitem{GWY} H. Gluck, F. Warner, C. Yang. Division algebras, fibrations of
spheres by great spheres and the topological determination of a space by the
gross behavior of its geodesics. \emph{Duke Math. J.}, 50 (1983) 1041--1076.

\bibitem{GluckYang} H.~Gluck and J.~Yang. Great circle fibrations and
contact structures on odd-dimensional  spheres, arxiv:1901.06370.

\bibitem{GHS} Y. Godoy, M. Harrison, M. Salvai. Outer billiards on the
manifolds of oriented geodesics of the three dimensional space forms,
arxiv:2110.01679.

\bibitem{GodoySalvaiCali} Y. Godoy, M. Salvai. Calibrated geodesic
foliations of hyperbolic space. \emph{Proc. Amer. Math. Soc.}, 144 (2016)
359--367.

\bibitem{GSMZ} Y. Godoy, M. Salvai, Global smooth geodesic foliations of the
hyperbolic space. \emph{Math. Z.}, 281 (2015) 43--54.

\bibitem{Gordon} W.~B. Gordon.  On the diffeomorphisms of euclidean space. {\em Amer. Math. Monthly}, 79 (1972) 755--759.

\bibitem{HarrisPaternain} A. Harris, G. Paternain. Conformal great circle
flows on the 3-sphere. \emph{Proc. Amer. Math. Soc.}, 144 (2016) 1725--1734.

\bibitem{HarrisonBLMS} M. Harrison. Contact structures induced by skew
fibrations of $\mathbb{R}^3$. \emph{Bull. Lond. Math. Soc.}, 51 (2019)
887--899.

\bibitem{HarrisonAGT} M. Harrison. Fibrations of $\mathbb{R}^{3}$ by
oriented lines. \emph{Algebr. Geom. Topol.}, 21 (2021) 2899--2928.

\bibitem{HarrisonMZ} M. Harrison. Skew flat fibrations. \emph{Math. Z.}, 282
(2016) 203--221.

\bibitem{KrantzParksBook} S. Krantz, H.  Parks. 
{\em The implicit function theorem. History, theory, and applications. } Reprint of the 2003 edition. Modern Birkhäuser Classics. Birkhäuser/Springer, New York.  2013.

\bibitem{McKay} B. McKay. The Blaschke conjecture and great circle
fibrations of spheres. \emph{Amer. J. Math.}, 126 (2004) 1155--1191.

\bibitem{Moser1} J. Moser, \emph{Stable and random motions in dynamical
systems.} Ann. of Math. Stud., 77, Princeton (1973).

\bibitem{Moser2} J. Moser, Is the solar system stable? \emph{Math.
Intelligencer} 1 (1978) 65--71.

\bibitem{OvsienkoTabachnikov} V. Ovsienko, S. Tabachnikov. On fibrations
with flat fibres. \emph{Bull. Lond. Math. Soc.}, 45 (2013) 625--632.

\bibitem{PeraltaSalas} D. Peralta-Salas, R. Slobodeanu, Contact
structures and Beltrami fields on the torus and the sphere,
arXiv:2004.10185v3 [math.DG]

\bibitem{SalvaiBL} M. Salvai. Global smooth fibrations of $\mathbb{R}^{3}$
by oriented lines. \emph{Bull. Lond. Math. Soc.}, 41 (2009) 155--163.

\bibitem{ST1} S. Tabachnikov. Outer billiards. \emph{Russian Math. Surveys}
48 (1993) 81--109.

\bibitem{ST2} S. Tabachnikov, \textsl{Billiards}. Panoramas et Synth\`eses
1, Soci{\' e}t{\' e} Math{\' e}matique de France (1995).

\bibitem{ST3} S. Tabachnikov. On the dual billiard problem. \emph{Adv. Math.}
115 (1995) 221--249.

\bibitem{ST4} S. Tabachnikov, \textsl{Geometry and billiards}. Student
Mathematical Library 30. American Mathematical Society, Providence RI (2005).
\end{thebibliography}
\end{document}